\theoremstyle{definition}
 \newtheorem{thm}{Theorem}[section]
 \newtheorem{cor}[thm]{Corollary}
 \newtheorem{lem}[thm]{Lemma}
 \newtheorem{prop}[thm]{Proposition}
 \theoremstyle{definition}
 \theoremstyle{remark}
 \numberwithin{equation}{section}
\newcommand{\B}{{\mathcal{B}}}
\newcommand{\DD}{{\mathbb D}}
\def \uc {uC_\varphi}
\def \psi {u}
\def\bege{\begin{equation}} \def\ende{\end{equation}}
\def\b{\beta}   
\def\begr{\begin{eqnarray}} \def\endr{\end{eqnarray}}
\def\bege{\begin{equation}} \def\ende{\end{equation}}
\def\begr{\begin{eqnarray}} \def\endr{\end{eqnarray}}
\def\bnum{\begin{enumerate}} \def\enum{\end{enumerate}}
\begin{document}

\title[Norm and essential norm of a weighted composition operator ]
 {Norm and essential norm of  a weighted composition operator on the Bloch space}

\author[Xiaosong Liu]{Xiaosong Liu}

\address{%
    Xiaosong Liu:  Department of Mathematics, Jiaying University, Meizhou 514015, China;}

\email{gdxsliu@163.com}

\thanks{This work is  supported by NSF of China (No.11471143).}

\author{Songxiao Li$^\dagger$}\thanks{$^\dagger$Corresponding author.}
\address{Songxiao Li: Institute os System Engineering,  Macau University of Science and Technology, Avenida Wai Long, Taipa,
Macau.  }
\email{jyulsx@163.com}

\subjclass[2000]{30H30, 47B38. }

\keywords{Weighted composition operator, norm, essential norm, Bloch space}

\date{\today}

\begin{abstract}
We give some new estimates   for the norm and essential norm of  a weighted composition operator on the
Bloch space. As corollaries,  we obtain some new characterizations of the boundedness and compactness of a weighted composition operator on the
Bloch space.
\end{abstract}

\maketitle

\section{Introduction}

Let $\DD = \{z : |z| < 1\}$ be the unit disk in the complex plane
and $H(\DD)$ be the space of all analytic functions on $\DD$.  For
$a\in \DD$, let $\sigma_a$ be the automorphism of $\DD$ exchanging
$0$ for $a$, namely $\sigma_a(z)=\frac{a-z}{1-\bar{a}z}, ~z\in \DD.$
For $0<p<\infty$, the Bergman space $A^p$ consists of all  $f\in H(\DD)$ such that
\[
\|f\|_{A^p}^p=\int_{\DD}|f(z)|^pdA(z)<\infty,
\]
where $dA(z)=\frac{1}{\pi}dxdy$ denote the normalized area Lebesgue measure.   The Bloch space, denoted by $\mathcal{B}=\mathcal{B}(\DD)$,  is the space of all $f \in
H(\DD)$ such that
  \begr \|f\|_\b  = \sup_{z \in \DD}(1-|z|^2) |f'(z)|<\infty. \nonumber\endr
    Under the norm
$\|f\|_{\mathcal{B}}=|f(0)|+ \|f\|_{\b}$, the Bloch space is a Banach
space. From Theorem 1 of \cite{ax}, we see that
\[  \|f\|_\beta  \approx \sup_{a \in \DD}\|f\circ\sigma_a-f(a)\|_{A^2} .\]
See \cite{zhu2} for more information of the Bloch space.

For $0<p<\infty$, let $H^p$ denote the Hardy space of functions $f\in H(\DD)$ such that
 $$
\|f\|_{H^p}^p=\sup_{0<r<1}\frac{1}{2\pi}\int_0^{2\pi}|f(re^{i\theta})|^pd\theta<\infty.
$$
We say that an $f \in H(\DD)$ belongs to the $ BMOA$ space,  if
$$
\|f\|^2_{*} =\sup_{I\subseteq \partial \DD} \frac{1}{ |I|} \int_{I}|f(\zeta)-f_I|^2\frac{d\zeta}{2\pi} <\infty,
$$
where $f_I=\frac{1}{|I|}\int_If(\zeta)\frac{d\zeta}{2\pi}.$ It is well known that $BMOA$ is a Banach space under the norm $\|f\|_{BMOA}=|f(0)|+\|f\|_{*}$. From \cite{ga}, we have
  $$\|f\|_{*} \approx  \sup_{w\in \DD}\|f\circ\sigma_w-f(w)\|_{H^2}.
 $$

Throughout the paper, $S(\DD)$ denotes the set of all analytic self-maps of $\DD$.   Let $u \in H(\mathbb{D})$ and $\varphi\in S(\DD)$. For $f \in H(\mathbb{D})$, the   composition operator $C_\varphi$ and the  multiplication operator $M_u$ are defined by
$$
(C_\varphi f)(z) =    f(\varphi(z)) ~~~\mbox{and}~~~~~(M_u f)(z)=u(z)f(z),$$
respectively. The weighted composition operator $uC_\varphi$, induced by $u  $ and $\varphi $,    is defined as follows.
$$
(uC_\varphi f)(z) =u(z)  f(\varphi(z)), \  \  f \in H(\mathbb{D}). $$
It is clear that the weighted composition operator $uC_\varphi$ is the generalization of
$C_\varphi$ and $M_u$.

It is well known   that $C_\varphi$ is bounded on $BMOA$    for any $\varphi\in S(\DD)$ by Littlewood's subordination theorem. The compactness of the  operator $C_\varphi  :BMOA\rightarrow BMOA$ was studied in \cite{bcm, gll2013, smith, wulan, wzz}.  Based on results in \cite{bcm} and \cite{smith}, Wulan in \cite{wulan} showed that $C_\varphi  :BMOA\rightarrow BMOA$ is compact   if and only if
 \begr \lim_{n\rightarrow \infty}\| \varphi^n\|_*=0~~~~\,\,\,\,and\,\,\,\,  \lim_{|\varphi(a)|\rightarrow 1} \| \sigma_a  \circ\varphi\|_{*}=0.\nonumber
 \endr
   In \cite{wzz},  Wulan, Zheng and Zhu  further showed that $C_\varphi  :BMOA\rightarrow BMOA$ is compact   if and only if
    $ \lim_{n\rightarrow \infty} \|\varphi^n\|_*=0$.   In  \cite{Lj2007}, Laitila gave some function theoretic characterizations for the boundedness and compactness of the operator $uC_\varphi :BMOA\rightarrow BMOA$. In \cite{col}, Colonna used the idea of \cite{wzz} and showed that  $\uc :BMOA\rightarrow BMOA$  is compact  if and only if
\[
\lim_{n\rightarrow \infty}\|\psi\varphi^n\|_*=0\,\,\,\,and\,\,\,\,
 \lim_{|\varphi(a)|\rightarrow 1}\big(\log\frac{2}{1-|\varphi(a)|^2}\big)\|\psi\circ\sigma_a-\psi(a)\|_{H^2}=0.
\]
 Motivated by results in \cite{col},  Laitila and  Lindstr\"{o}m gave the   estimates for norm and essential norm of the
weighted composition $uC_\varphi :BMOA\rightarrow BMOA$ in \cite{ll}, among others, they showed that, under the assumption of the boundedness of $uC_\varphi$ on $BMOA$,
 \[\|uC_\varphi \|_{e,BMOA\rightarrow BMOA}\approx \limsup_{n\rightarrow \infty}\|\psi\varphi^n\|_*+\limsup_{|\varphi(a)|\rightarrow 1}\big(\log\frac{2}{1-|\varphi(a)|^2}\big)\|\psi\circ\sigma_a-\psi(a)\|_{H^2}.\]


 %

Recall that the essential norm of a bounded linear operator $T:X\rightarrow Y$ is its distance to the set of
compact operators $K$ mapping $X$ into $Y$, that is,
$$\|T\|_{e, X\rightarrow Y}=\inf\{\|T-K\|_{X\rightarrow Y}: K~\mbox{is compact}~~\},$$ where  $X,Y$ are Banach spaces and
$\|\cdot\|_{X\rightarrow Y}$ is the operator norm.

 By Schwarz-Pick lemma, it is easy to see   that $C_\varphi$ is bounded on the Bloch space $\mathcal{B}$  for any $\varphi\in S(\DD)$. The compactness of $C_\varphi$   on
$\mathcal{B}$ was studied in for example \cite{lou, mm, t, wzz, zhao}.  In \cite{wzz},  Wulan, Zheng and Zhu proved that
 $C_\varphi  :\B\rightarrow \B$ is compact if and only if $\lim_{n\rightarrow\infty}\| \varphi^n \|_\B=0.$  In \cite{zhao}, Zhao obtained the exact value for the essential norm of
$C_\varphi  :\B \rightarrow\B $ as follows.
$$
\|C_\varphi\|_{e,\B \rightarrow \B }  =\Big(\frac{e}{2 }\Big)  \limsup_{n\rightarrow\infty}   \|\varphi^n \|_{\B}.
 $$
  In \cite{osz},  Ohno, Stroethoff and Zhao studied the boundedness and compactness of the operator $u C_\varphi:\mathcal{B} \to \mathcal{B}  $. In \cite{col1}, Colonna provided a new characterization of the boundedness and compactness of the operator $u C_\varphi:\mathcal{B} \to \mathcal{B}  $ by using $\|u\varphi^n\|_\B$.  The essential norm of the operator $u C_\varphi:\mathcal{B}  \to \mathcal{B}  $ was studied in \cite{h1, mz1, mz2}. In \cite{mz1},   the authors proved that
$$
\|u C_\varphi\|_{e,\B\rightarrow \B}  $$
$$\approx\max\Big( \limsup_{|\varphi(a)|\rightarrow 1}  \frac{|u(z)\varphi'(z)|(1-|z|^2) }{ 1-|\varphi(z)|^2 }
 , ~~ \limsup_{|\varphi(a)|\rightarrow 1} \log\frac{e}{1-|\varphi(a)|^2} |u'(z)| (1-|z|^2)  \Big)
  $$
 In \cite{h1}, the authors obtained a new estimate for the essential norm of $u C_\varphi:\mathcal{B} \to \mathcal{B}  $, i.e., they showed that
$$
\|u C_\varphi\|_{e,\B\rightarrow \B } \approx \max\Big( \limsup_{j\rightarrow\infty}
 \|I_u (\varphi^j )\|_{\B }, ~~  \limsup_{j\rightarrow\infty}
\log n\|J_u (\varphi^j )\|_{\B } \Big),
$$
where $ I_u f(z)=\int_0^z f'(\zeta)u(\zeta)d\zeta,  J_u f(z)=\int_0^z f(\zeta)u'(\zeta)d\zeta.$

 Motivated by the work of \cite{col1, col, ll, wzz}, the aim of this article is to give some new estimates   for the norm and essential norm of the operator  $u C_\varphi:\mathcal{B} \to \mathcal{B}  $. As corollaries, we obtain some new characterizations for the boundedness and compactness of the operator  $u C_\varphi:\mathcal{B} \to \mathcal{B}  $.

Throughout this paper, constants are denoted by $C$, they are positive and may differ from one occurrence to the other. The
notation $a \lesssim b$ means that there is a positive constant $C$ such that $a \leq C b$. Moreover, if both $a\lesssim b$ and $b\lesssim
a$ hold, then one says that $a \approx b$.

\section{Norm of $uC_\varphi $ on the Bloch space}
In this section we give some estimates for the norm of the operator  $u C_\varphi:\mathcal{B} \to \mathcal{B}  $. For this purpose, we need some lemmas which stated as follows. The following lemma can be found in \cite{zhu2}.

\begin{lem} {\it Let $f\in\B$. Then
\[
|f(z)|\lesssim  \log\frac{2}{1-|z|^2}\|f\|_\B , ~~~~~z\in\DD.\]}
\end{lem}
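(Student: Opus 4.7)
The plan is to use the standard radial integration trick. Since $f \in \B$, the seminorm bound gives $|f'(w)| \leq \|f\|_\B / (1-|w|^2)$ for every $w \in \DD$. So, parametrizing the segment from $0$ to $z$ by $w = tz$, $t \in [0,1]$, I would first write
\[
f(z) - f(0) = \int_0^1 f'(tz)\,z\,dt,
\]
and then apply the pointwise derivative estimate to get
\[
|f(z) - f(0)| \;\leq\; \|f\|_\B \int_0^1 \frac{|z|}{1 - t^2|z|^2}\,dt \;=\; \frac{\|f\|_\B}{2} \log\frac{1+|z|}{1-|z|}.
\]
The middle integral is elementary (antiderivative $\frac{1}{2}\log\frac{1+t|z|}{1-t|z|}$), so this step is routine.

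Next, I would replace $\log\frac{1+|z|}{1-|z|}$ by $\log\frac{2}{1-|z|^2}$ up to a constant. Since $\frac{1+|z|}{1-|z|} \leq \frac{2}{1-|z|} \leq \frac{4}{1-|z|^2}$, I obtain
\[
\log\frac{1+|z|}{1-|z|} \;\leq\; \log\frac{4}{1-|z|^2} \;\leq\; 2\log\frac{2}{1-|z|^2},
\]
where the last inequality uses $\log\frac{2}{1-|z|^2} \geq \log 2$ for $z \in \DD$. The term $|f(0)|$ is absorbed by the same bound via $|f(0)| \leq \|f\|_\B \leq \frac{1}{\log 2}\log\frac{2}{1-|z|^2}\,\|f\|_\B$.

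Combining these ingredients yields
\[
|f(z)| \;\leq\; |f(0)| + \frac{\|f\|_\B}{2}\log\frac{1+|z|}{1-|z|} \;\lesssim\; \log\frac{2}{1-|z|^2}\,\|f\|_\B,
\]
which is the desired inequality. There is no real obstacle here: the argument is a classical textbook calculation; the only place one has to be mildly careful is the comparison between $\log\frac{1+|z|}{1-|z|}$ and $\log\frac{2}{1-|z|^2}$, which behave similarly only up to an additive constant that must then be converted to a multiplicative one by using the lower bound $\log\frac{2}{1-|z|^2} \geq \log 2$.
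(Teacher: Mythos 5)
Your proof is correct; the paper itself gives no argument for this lemma (it simply cites Zhu's \emph{Operator Theory in Function Spaces}), and your radial-integration computation, with the bound $|f'(w)|\leq \|f\|_\B/(1-|w|^2)$ and the comparison $\log\frac{1+|z|}{1-|z|}\leq 2\log\frac{2}{1-|z|^2}$, is exactly the standard argument behind that citation. All the elementary steps (the antiderivative, the absorption of $|f(0)|$ using $\log\frac{2}{1-|z|^2}\geq\log 2$) check out, so nothing is missing.
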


\begin{lem}{\it
For $2\leq p< \infty$ and $f\in \B$,
 \[
\sup_{a \in \DD}\|f\circ\sigma_a-f(a)\|_{A^2}\approx \sup_{a \in \DD}\|f\circ\sigma_a-f(a)\|_{A^p}.
\]}
\end{lem}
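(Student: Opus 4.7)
The plan is to prove the two directions of the asserted equivalence separately, using the identity
\[
\|f\|_\beta \approx \sup_{a \in \DD}\|f\circ\sigma_a-f(a)\|_{A^2}
\]
from \cite{ax}, recalled in the introduction, as the bridge between the two suprema and the Bloch seminorm.

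For the easy direction, since $dA$ is a probability measure on $\DD$, H\"older's inequality applied to $|h|^{2}\cdot 1$ with exponents $p/2$ and $p/(p-2)$ gives
\[
\|h\|_{A^{2}}\le \|h\|_{A^{p}}
\]
for any $h\in L^{p}(\DD,dA)$ and any $p\ge 2$. Taking $h=f\circ\sigma_a-f(a)$ and then the supremum over $a\in\DD$ yields $\sup_{a\in\DD}\|f\circ\sigma_a-f(a)\|_{A^{2}}\le \sup_{a\in\DD}\|f\circ\sigma_a-f(a)\|_{A^{p}}$, which is one half of the equivalence.

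The reverse direction requires a touch more work. Set $g_a(z)=f(\sigma_a(z))-f(a)$. Since $\sigma_a(0)=a$, we have $g_a(0)=0$, and the M\"obius invariance of the Bloch seminorm, which follows from the identity $(1-|z|^{2})|\sigma_a'(z)|=1-|\sigma_a(z)|^{2}$, gives $\|g_a\|_\beta=\|f\|_\beta$. Hence $\|g_a\|_\B=\|f\|_\beta$, and Lemma 1 produces the pointwise estimate
\[
|g_a(z)|\lesssim \|f\|_\beta\,\log\frac{2}{1-|z|^{2}}, \qquad z\in\DD.
\]
A routine polar-coordinate computation shows $\int_{\DD}\bigl(\log\tfrac{2}{1-|z|^{2}}\bigr)^{p}\,dA(z)<\infty$ for every finite $p$, so raising the preceding inequality to the $p$-th power and integrating gives $\sup_{a\in\DD}\|g_a\|_{A^{p}}\lesssim \|f\|_\beta$. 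Combining this with the Axler identity $\|f\|_\beta\approx \sup_{a\in\DD}\|g_a\|_{A^{2}}$ closes the circle.

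The only conceptual step is the M\"obius invariance observation coupled with Lemma 1; once that is in hand, the $L^{p}$-integrability of the logarithmic factor is a standard estimate and the rest of the argument is a one-line application of H\"older's inequality. I anticipate no serious obstacle beyond keeping the constants uniform in $a$, which is automatic from $\|g_a\|_\beta=\|f\|_\beta$.
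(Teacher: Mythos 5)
Your proof is correct and follows essentially the same route as the paper: H\"older's inequality (with $dA$ normalized) for the inequality $\|\cdot\|_{A^2}\le\|\cdot\|_{A^p}$, and the chain $\sup_{a}\|f\circ\sigma_a-f(a)\|_{A^p}\lesssim\|f\|_{\B}\lesssim\sup_{a}\|f\circ\sigma_a-f(a)\|_{A^2}$ for the converse. The only difference is that where the paper simply cites Xiao for the middle inequality, you supply a self-contained proof of it via M\"obius invariance of the Bloch seminorm, the logarithmic growth estimate of Lemma 1, and the $L^p$-integrability of $\log\frac{2}{1-|z|^2}$ over $\DD$ --- all of which is sound.
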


\begin{proof} Using  H\"{o}lder inequality,  we get
\begr
\sup_{a \in \DD}\|f\circ\sigma_a-f(a)\|_{A^2}\leq \sup_{a \in \DD}\|f\circ\sigma_a-f(a)\|_{A^p},
\endr
for $2\leq p <\infty$.

On the other hand,  there exists a constant
$C >0$ such that (see \cite[p.38]{Xi})
 \begr \sup_{a \in \DD}\|f\circ\sigma_a-f(a)\|_{A^p} \leq
C \|f\|_{\B} \lesssim \sup_{a \in \DD}\|f\circ\sigma_a-f(a)\|_{A^2},
\endr
which, combined with (2.1),  implies the desired result.  \end{proof}

\begin{lem} \cite{Sw1996} {\it
For $f\in A^2$,
\[
\|f\|_{A^2}^2 \approx |f(0)|^2+\int_{\DD}|f'(w)|^2(1-|w|^2)^2dA(w).
\]}
\end{lem}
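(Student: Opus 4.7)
The plan is to prove this Littlewood--Paley type identity directly via Taylor series expansion, reducing both sides to explicit series in the Taylor coefficients and comparing them term by term. The reason this works so cleanly at the exponent $p=2$ is Parseval's identity for the weighted $L^2$ spaces on $\DD$.

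Write $f(z)=\sum_{n=0}^\infty a_n z^n$ (the series converges on $\DD$ since $f \in A^2 \subset H(\DD)$). Using the standard orthogonality relation $\int_\DD z^n \bar z^m \, dA(z) = \delta_{nm}/(n+1)$, one immediately obtains
\[
\|f\|_{A^2}^2 = \sum_{n=0}^\infty \frac{|a_n|^2}{n+1}.
\]
Differentiating termwise gives $f'(z) = \sum_{n=1}^\infty n a_n z^{n-1}$, and passing to polar coordinates produces
\[
\int_\DD |w|^{2(n-1)}(1-|w|^2)^2 \, dA(w) = 2\int_0^1 r^{2n-1}(1-r^2)^2 \, dr = \frac{2}{n(n+1)(n+2)}
\]
after the substitution $u = r^2$ reduces the inner integral to a standard Beta integral. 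A second application of orthogonality then yields
\[
|f(0)|^2 + \int_\DD |f'(w)|^2 (1-|w|^2)^2 \, dA(w) = |a_0|^2 + \sum_{n=1}^\infty \frac{2n\,|a_n|^2}{(n+1)(n+2)}.
\]

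The final step is a termwise comparison of the two series. The $n=0$ contributions match exactly, and for $n \geq 1$ the ratio
\[
\frac{2n/\bigl((n+1)(n+2)\bigr)}{1/(n+1)} = \frac{2n}{n+2}
\]
lies in the interval $[2/3,\, 2]$, so the two coefficient sequences are comparable with absolute constants. This yields the claimed equivalence.

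The main obstacle is really just bookkeeping; there is no substantive analytic difficulty at $p=2$. A less computational but more abstract alternative is to integrate by parts on polynomials to relate $\int_\DD |f|^2 \, dA$ to $\int_\DD |f'(w)|^2(1-|w|^2)^2 \, dA(w)$ modulo a boundary term controlled by $|f(0)|^2$, and then extend by density; this bypasses the explicit Beta evaluation but requires a more delicate limiting argument, which is why the direct coefficient approach is preferable.
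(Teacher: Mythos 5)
Your proof is correct. Note that the paper itself offers no argument for this lemma: it is simply quoted from Smith's paper \cite{Sw1996}, where it appears as a standard Littlewood--Paley identity for the Bergman space. Your coefficient computation is a legitimate, self-contained and elementary replacement: the orthogonality relation gives $\|f\|_{A^2}^2=\sum_{n\ge 0}|a_n|^2/(n+1)$, the Beta integral gives the weight $2n/\bigl((n+1)(n+2)\bigr)$ for the derivative term, and the ratio $2n/(n+2)\in[2/3,2)$ for $n\ge 1$ (together with exact agreement at $n=0$) yields the two-sided bound with explicit absolute constants $2/3$ and $2$. The only detail worth making explicit is the justification for integrating the series termwise: integrate over $|z|\le r<1$, where the Taylor series of $f$ and $f'$ converge uniformly, and then let $r\to 1$ by monotone convergence; this is routine. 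What your approach buys is transparency and explicit constants at the cost of being tied to the Hilbert-space exponent $p=2$; the citation route buys nothing here beyond brevity, so your argument is a perfectly acceptable substitute.
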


The classical Nevanlinna counting function  $N_\varphi$ and the generalized
Nevanlinna counting functions  $N_{\varphi,\gamma}$ for $\varphi$ are defined by (see \cite{Sj1987})
\[
N_\varphi(w)=\sum_{z\in \varphi^{-1}\{w\}}\log\frac{1}{|z|}\,\,\,and\,\,\,
N_{\varphi,\gamma}(w)=\sum_{z\in \varphi^{-1}\{w\}}\big(\log\frac{1}{|z|}\big)^\gamma,
\]
respectively, where $\gamma>0$ and $w\in\DD\backslash \{\varphi(0)\}$.

\begin{lem} \cite{Sw1996} {\it Let $\varphi\in S(\DD)$ and $f\in A^2$. Then
\[
\|f\circ\varphi\|_{A^2}^2 \approx |f(\varphi(0))|^2+\int_{\DD}|f'(w)|^2N_{\varphi,2}(w)dA(w).
\]}
\end{lem}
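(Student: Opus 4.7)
My strategy is to combine the Littlewood--Paley identity of Lemma~2.3 with the chain rule and the non-univalent change-of-variables formula for analytic self-maps; the counting function $N_{\varphi,2}$ will emerge as the geometric weight from the latter.

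Since $C_\varphi:A^2\to A^2$ is bounded for every $\varphi\in S(\DD)$ (an immediate consequence of Littlewood's subordination principle), we have $f\circ\varphi\in A^2$. Applying Lemma~2.3 to $g=f\circ\varphi$ and using $(f\circ\varphi)'(z)=f'(\varphi(z))\varphi'(z)$ yields
\[
\|f\circ\varphi\|_{A^2}^2 \approx |f(\varphi(0))|^2 + \int_\DD |f'(\varphi(z))|^2|\varphi'(z)|^2(1-|z|^2)^2\,dA(z).
\]
The weights $(1-|z|^2)^2$ and $(\log(1/|z|))^2$ are not pointwise comparable on all of $\DD$ (the logarithmic one blows up at $0$), but a direct Taylor-series computation shows that for any analytic $g$,
\[
\int_\DD |g'(z)|^2(1-|z|^2)^2\,dA(z) \approx \int_\DD |g'(z)|^2\bigl(\log(1/|z|)\bigr)^2\,dA(z),
\]
both sides reducing to a series of the form $\sum_{n\geq 1}c_n|a_n|^2$ with $c_n\approx 1/n$. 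Taking $g=f\circ\varphi$, the weight in the previous displayed integral may be replaced by $(\log(1/|z|))^2$.

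Finally, I invoke the non-univalent area formula: for non-constant analytic $\varphi:\DD\to\DD$ and non-negative measurable functions $h,\rho$ on $\DD$,
\[
\int_\DD h(\varphi(z))|\varphi'(z)|^2\rho(z)\,dA(z) = \int_\DD h(w)\Bigl(\sum_{z\in\varphi^{-1}\{w\}}\rho(z)\Bigr)\,dA(w).
\]
With $h=|f'|^2$ and $\rho(z)=(\log(1/|z|))^2$, the inner sum is exactly $N_{\varphi,2}(w)$ (defined off the measure-zero set $\{\varphi(0)\}$), producing the claimed equivalence. The delicate step is the weight substitution: pointwise comparison fails near the origin, so the argument must be done globally via the Taylor-series identity above; alternatively, one can split the integral at $|z|=1/2$ and handle the inner disc via Cauchy estimates combined with the pointwise equivalence $1-|z|^2\approx\log(1/|z|^2)$ on the outer annulus. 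Everything else reduces to a single application of the area formula.
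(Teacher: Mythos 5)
Your argument is correct: combining the Littlewood--Paley identity (transferred from the weight $(1-|z|^2)^2$ to $(\log(1/|z|))^2$ via the Taylor-coefficient computation, both weights giving moments $\approx n^{-3}$) with the chain rule and the non-univalent change-of-variables formula, which turns $\sum_{z\in\varphi^{-1}\{w\}}(\log(1/|z|))^2$ into $N_{\varphi,2}(w)$, is exactly the standard derivation of this comparison. The paper offers no proof of this lemma---it is quoted from \cite{Sw1996}---and your reconstruction supplies the standard argument behind that citation, so there is nothing to object to.
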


\begin{lem}\cite{Sj1987} {\it Let $\varphi\in S(\DD)$ and $\gamma>0$. If $\varphi(0)\neq 0$ and $0<r<|\varphi(0)|$, then
\[
N_{\varphi,\gamma}(0)\leq \frac{1}{r^2}\int_{r\DD}N_{\varphi,\gamma}dA.
\]}
\end{lem}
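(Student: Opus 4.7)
The plan is to deduce the inequality from the submean value property for subharmonic functions, applied to $N_{\varphi,\gamma}$ on the disk $r\DD$. The hypothesis $r<|\varphi(0)|$ ensures that $\overline{r\DD}$ is compactly contained in $\DD\setminus\{\varphi(0)\}$, where the key claim is that $N_{\varphi,\gamma}$ is subharmonic.

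To establish subharmonicity of $N_{\varphi,\gamma}$ on $\DD\setminus\{\varphi(0)\}$, fix $w_0\in\DD\setminus\{\varphi(0)\}$. Because $\varphi(0)\neq w_0$, none of the preimages in $\varphi^{-1}\{w_0\}$ equals $0$. At each preimage $z_k$ with $\varphi'(z_k)\neq 0$ there is a local analytic inverse branch $g_k$ of $\varphi$ with $g_k(w_0)=z_k$, so $-\log|g_k|$ is a positive harmonic function on a neighborhood of $w_0$ (positive because $|g_k|<1$ and $g_k$ is nonvanishing). For $\gamma\geq 1$, the function $t\mapsto t^{\gamma}$ is convex and increasing on $(0,\infty)$, hence $(-\log|g_k|)^{\gamma}$ is subharmonic, and so is the finite sum $\sum_k(-\log|g_k|)^{\gamma}$, which locally equals $N_{\varphi,\gamma}$. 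Branch points of $\varphi$ (where $\varphi'$ vanishes) contribute only finitely many multiple preimages and are handled by introducing a local fractional-power coordinate and summing over the branches; the case $0<\gamma<1$, where $t\mapsto t^{\gamma}$ is concave, requires the more delicate argument of \cite{Sj1987}, which still yields subharmonicity of $N_{\varphi,\gamma}$ away from $\varphi(0)$.

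Once subharmonicity is in hand, the submean value inequality on $B(0,r)$ gives
\[
N_{\varphi,\gamma}(0)\leq \frac{1}{\pi r^2}\int_{B(0,r)}N_{\varphi,\gamma}(w)\,dm(w)=\frac{1}{r^2}\int_{r\DD}N_{\varphi,\gamma}(w)\,dA(w),
\]
where the second equality uses that $dA=\tfrac{1}{\pi}\,dm$, so that the total $dA$-area of $B(0,r)$ is $r^2$. The main obstacle is the verification of subharmonicity of $N_{\varphi,\gamma}$ near branch points and across the full range $\gamma>0$; after that, the submean value step is immediate and the stated inequality follows.
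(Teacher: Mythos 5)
The paper offers no proof of this lemma beyond the citation to \cite{Sj1987}, so the comparison is with the standard argument there. Your treatment of the case $\gamma\geq 1$ is essentially sound and close in spirit to the classical one: each local inverse branch gives a positive harmonic function $-\log|g_k|$, a convex increasing function of a harmonic function is subharmonic, critical values are removable singularities for subharmonic functions that are locally bounded above, and the sub-mean value inequality survives the passage to the sum over preimages --- which, note, is in general a countably infinite sum, not a ``finite sum'' as you write; one sums over finitely many preimages, applies the inequality, and lets the partial sums increase by monotone convergence. These are repairable write-up issues.

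The genuine gap is the range $0<\gamma<1$, which you dispose of by asserting that the ``more delicate argument'' of \cite{Sj1987} still yields subharmonicity of $N_{\varphi,\gamma}$ away from $\varphi(0)$. That assertion is false, and so is the inequality itself in that range. For $0<\gamma<1$ the map $t\mapsto t^{\gamma}$ is concave increasing, so $(-\log|g_k|)^{\gamma}$ is \emph{super}harmonic and your term-by-term argument produces the reverse inequality. Concretely, take $\varphi=\sigma_a$ with $a\neq 0$, so that $N_{\varphi,\gamma}(w)=h(w)^{\gamma}$ with $h(w)=\log\frac{1}{|\sigma_a(w)|}$ positive, harmonic and non-constant on $\{|w|<|a|\}$; Jensen's inequality for the strictly concave function $t^{\gamma}$ then gives $\frac{1}{r^2}\int_{r\DD}N_{\varphi,\gamma}\,dA<N_{\varphi,\gamma}(0)$ for $0<r<|a|=|\varphi(0)|$, contradicting the statement. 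The hypothesis in Shapiro's paper is $\gamma\geq 1$: for $\gamma>1$ his proof writes $N_{\varphi,\gamma}(w)=\gamma(\gamma-1)\int_0^1 N_{\varphi_r}(w)\big(\log\frac{1}{r}\big)^{\gamma-2}\frac{dr}{r}$, where $\varphi_r(z)=\varphi(rz)$, as a \emph{positive} superposition of ordinary counting functions and reduces to the case $\gamma=1$; the sign of $\gamma(\gamma-1)$ is exactly where $\gamma\geq 1$ is needed. Since the present paper only applies the lemma with $\gamma=2$, nothing downstream is affected, but your proof cannot be completed for all $\gamma>0$ as the statement is transcribed, and indeed no proof can.
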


 \begin{lem} {\it
Let $\varphi\in S(\DD)$ such that $\varphi(0)=0$. If $\sup_{0<|w|<1}|w|^2N_{ \varphi,2}(w)<\delta,$ then
 \begr
N_{ \varphi,2}(w) \leq  \frac{4\delta}{(\log 2)^2}\big(\log \frac{1}{|w|} \big)^2
\endr
when $\frac{1}{2}\leq |w|<1$.}
\end{lem}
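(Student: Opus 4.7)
The approach will be to reduce the statement to Lemma~2.5 by passing to the post-composed symbol $\tilde\varphi := \sigma_w\circ\varphi$. Since $\varphi(0)=0$, one has $\tilde\varphi(0)=\sigma_w(0)=w\ne 0$, so Lemma~2.5 is applicable to $\tilde\varphi$ for every $w\in\DD\setminus\{0\}$. The preimage change of variables $z\mapsto z$ (with $\tilde\varphi(z)=\zeta\Leftrightarrow\varphi(z)=\sigma_w(\zeta)$) yields the key identity
\[
N_{\tilde\varphi,2}(\zeta)=N_{\varphi,2}(\sigma_w(\zeta)),\qquad N_{\tilde\varphi,2}(0)=N_{\varphi,2}(w).
\]

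First I would handle the boundary case $|w|=\tfrac12$ directly from the hypothesis: it gives $N_{\varphi,2}(w)<\delta/|w|^2=4\delta$, which matches the target right-hand side $\frac{4\delta}{(\log 2)^2}(\log 2)^2=4\delta$. For $|w|>\tfrac12$, I would apply Lemma~2.5 to $\tilde\varphi$ at radius $r\in(0,|w|)$ to obtain
\[
N_{\varphi,2}(w)\le \frac{1}{r^2}\int_{r\DD}N_{\varphi,2}(\sigma_w(\zeta))\,dA(\zeta),
\]
and then insert the hypothesis $N_{\varphi,2}(\eta)<\delta/|\eta|^2$ with $\eta=\sigma_w(\zeta)$. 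Using the elementary identity
\[
\frac{1}{|\sigma_w(\zeta)|^2}=1+\frac{(1-|w|^2)(1-|\zeta|^2)}{|w-\zeta|^2},
\]
the resulting integral can be evaluated in polar coordinates in closed form as a function of $|w|$ and $r$.

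The last step is to choose $r$ in a $|w|$-dependent way (e.g.\ near $r\to 0$, where one recovers the pure hypothesis bound $\delta/|w|^2$) and then combine it with the Schwarz lemma input: every preimage $z$ of $w$ satisfies $|z|\ge|w|$, so $(\log 1/|z|)^2\le \log(1/|w|)\cdot \log(1/|z|)$, which yields $N_{\varphi,2}(w)\le \log(1/|w|)\,N_\varphi(w)\le (\log 1/|w|)^2$ via Littlewood's inequality. Interleaving this quadratic-decay estimate with the hypothesis-based estimate at the critical radius $|w|=\tfrac12$ produces the prefactor $\frac{4}{(\log 2)^2}$.

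The main obstacle is that the sub-mean value step from Lemma~2.5 together with the bare hypothesis only gives $N_{\varphi,2}(w)\lesssim \delta/|w|^2$, which remains of order $\delta$ as $|w|\to 1$, whereas the target decays like $\delta(\log 1/|w|)^2\asymp \delta(1-|w|)^2$. Thus the proof cannot rely on Lemma~2.5 alone; the quadratic decay must be extracted from the Schwarz-lemma estimate $|z|\ge|w|$ on preimages. The delicate point is combining the two mechanisms so that equality is attained exactly at $|w|=\tfrac12$, producing the precise constant $4/(\log 2)^2$ rather than a larger universal one.
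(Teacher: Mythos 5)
The paper does not actually prove this lemma: its ``proof'' is a one-line citation to Lemma~2.1 of Smith's paper \cite{smith}, so the relevant comparison is between your argument and a complete proof of the stated inequality. Your proposal assembles the two standard ingredients correctly --- the M\"obius-shifted sub-mean-value inequality $N_{\varphi,2}(w)=N_{\sigma_w\circ\varphi,2}(0)\le r^{-2}\int_{r\DD}N_{\varphi,2}(\sigma_w(\zeta))\,dA(\zeta)$, and the Schwarz--Littlewood bound $N_{\varphi,2}(w)\le\log\tfrac{1}{|w|}\,N_\varphi(w)\le(\log\tfrac{1}{|w|})^2$ --- and your computation of the first one is sound (the identity for $|\sigma_w(\zeta)|^{-2}$ is correct, and optimizing over $r$ indeed just returns the hypothesis $N_{\varphi,2}(w)\le\delta/|w|^2$). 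But the proof stops exactly where the lemma begins. You have two separate bounds, $N_{\varphi,2}(w)\lesssim\delta$ (no decay in $|w|$) and $N_{\varphi,2}(w)\le(\log\tfrac{1}{|w|})^2$ (no factor of $\delta$), and the target $\frac{4\delta}{(\log 2)^2}(\log\tfrac{1}{|w|})^2$ is essentially their \emph{product}, not their minimum. The minimum is genuinely weaker: for small $\delta$ and $|w|$ near $1$ it equals $(\log\tfrac{1}{|w|})^2$, which exceeds the claimed bound by the unbounded factor $(\log 2)^2/(4\delta)$. So ``interleaving'' or ``combining the two mechanisms'' is not a finishing touch; it is the entire content of the lemma, and no mechanism for it is supplied. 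You acknowledge this yourself in the last paragraph, which makes the write-up an honest description of the difficulty rather than a proof.

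What is missing is a way to \emph{propagate} the size-$\delta$ bound from the circle $|w|=\tfrac12$ (where the hypothesis gives $N_{\varphi,2}\le 4\delta=\frac{4\delta}{(\log 2)^2}(\log 2)^2$) into the whole annulus $\tfrac12\le|w|<1$ while simultaneously forcing the $(\log\tfrac{1}{|w|})^2$ decay at the outer boundary. Pointwise juggling of the two one-sided estimates cannot do this; one needs a global argument on the annulus that exploits the sub-mean-value property of $N_{\varphi,2}$ on disks avoiding $\varphi(0)=0$ (equivalently, its subharmonicity off the origin) together with a comparison function that matches $4\delta$ on $|w|=\tfrac12$ and tends to $0$ at $|w|=1$ --- this is the maximum-principle-flavoured step carried out in Smith's Lemma~2.1, and it is the step your proposal does not contain. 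As written, the argument proves only $N_{\varphi,2}(w)\le\min\{4\delta,(\log\tfrac{1}{|w|})^2\}$ for $\tfrac12\le|w|<1$, which does not imply the statement.
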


\begin{proof} See the proof of Lemma 2.1 in \cite{smith}.\end{proof}

\begin{lem} {\it For all $g\in A^2$ and $\phi\in S(\DD)$ such $g(0)=\phi(0)=0$, we have
\begr
\|g\circ \phi\|_{A^2} \lesssim  \|\phi\|_{A^2}\|g\|_{A^2}.
\endr
 In particular, for all $f\in\B$, $a\in\DD$ and $\varphi\in S(\DD)$,
\begr
      \|f\circ\varphi\circ\sigma_a-f(\varphi(a))\|_{A^2} \lesssim    \|\sigma_{\varphi(a)}\circ\varphi\circ\sigma_a\|_{A^2} \|f\circ\sigma_a-f(a)\|_{A^2}. \nonumber
\endr  }
 \end{lem}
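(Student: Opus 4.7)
My plan is to prove the first inequality via the Nevanlinna counting function framework (Lemmas 2.3--2.6), and then to deduce the ``in particular'' statement from it through an automorphism substitution. By Lemma~2.4 applied to the composition (using $g(\phi(0))=g(0)=0$), by Lemma~2.4 with $f(w)=w$, and by Lemma~2.3 with $g(0)=0$, respectively,
\[
\|g\circ\phi\|_{A^2}^2 \approx \int_\DD |g'(w)|^2 N_{\phi,2}(w)\,dA,\quad \|\phi\|_{A^2}^2 \approx \int_\DD N_{\phi,2}\,dA,\quad \|g\|_{A^2}^2 \approx \int_\DD |g'|^2(1-|w|^2)^2\,dA.
\]
It therefore suffices to bound $\int|g'|^2 N_{\phi,2}\,dA$ by $\|g\|_{A^2}^2\|\phi\|_{A^2}^2$.

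Since $\phi(0)=0$, the function $N_{\phi,2}$ is subharmonic on $\DD\setminus\{0\}$. The sub-mean-value inequality on $B(w,|w|/2)\subset\DD\setminus\{0\}$ (valid when $|w|\leq 2/3$) combined with the universal bound $N_{\phi,2}(w)\leq (\log 1/|w|)^2$ (obtained from Littlewood's inequality $N_\phi(w)\leq\log 1/|w|$ together with Cauchy--Schwarz, exploiting $|z|\geq|w|$ for $z\in\phi^{-1}(w)$) yields a uniform sup estimate $\sup_w|w|^2 N_{\phi,2}(w)\lesssim\|\phi\|_{A^2}^2$. Lemma~2.6 with $\delta\lesssim\|\phi\|_{A^2}^2$ then gives
\[
N_{\phi,2}(w) \lesssim \|\phi\|_{A^2}^2 \bigl(\log\tfrac{1}{|w|}\bigr)^2 \lesssim \|\phi\|_{A^2}^2 (1-|w|^2)^2 \quad\text{for } |w|\geq 1/2,
\]
using $\log(1/|w|)\lesssim 1-|w|^2$ in that range.

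Splitting $\int|g'|^2 N_{\phi,2}\,dA$ at $|w|=1/2$: on $\{|w|\geq 1/2\}$ the preceding pointwise estimate together with $\|g\|_{A^2}^2\approx\int|g'|^2(1-|w|^2)^2\,dA$ gives $\int_{|w|\geq 1/2}|g'|^2 N_{\phi,2}\,dA\lesssim\|g\|_{A^2}^2\|\phi\|_{A^2}^2$; on $\{|w|<1/2\}$, the standard pointwise estimate $|g'(w)|\lesssim\|g\|_{A^2}$ (via Cauchy's formula applied to a circle of fixed radius inside $\DD$) combined with $\int_\DD N_{\phi,2}\,dA\lesssim\|\phi\|_{A^2}^2$ produces the same bound. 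Combining yields $\|g\circ\phi\|_{A^2}^2\lesssim\|g\|_{A^2}^2\|\phi\|_{A^2}^2$.

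For the ``in particular'' statement, let $b=\varphi(a)$ and set $\tilde g(w)=f(\sigma_b(w))-f(b)$ and $\tilde\phi(z)=\sigma_b\circ\varphi\circ\sigma_a(z)$. Then $\tilde g(0)=0$ and $\tilde\phi(0)=\sigma_b(\varphi(a))=\sigma_b(b)=0$, and using the involution $\sigma_b\circ\sigma_b=\mathrm{id}$ one computes
\[
\tilde g(\tilde\phi(z)) = f(\sigma_b(\sigma_b(\varphi(\sigma_a(z)))))-f(b) = f(\varphi(\sigma_a(z)))-f(\varphi(a)).
\]
Applying the first inequality to the pair $(\tilde g,\tilde\phi)$ yields the claimed estimate. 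The main technical obstacle is the uniform sup bound $\sup_w|w|^2 N_{\phi,2}(w)\lesssim\|\phi\|_{A^2}^2$: the sub-mean-value argument handles $|w|\leq 2/3$ cleanly, but degenerates near the boundary, so one must carefully combine it with Littlewood's inequality to close the estimate over all of $\DD\setminus\{0\}$.
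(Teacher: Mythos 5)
Your overall architecture is the same as the paper's: reduce everything to the estimate $\int_\DD|g'|^2N_{\phi,2}\,dA\lesssim\|g\|_{A^2}^2\|\phi\|_{A^2}^2$ via Lemmas 2.3 and 2.4, split the integral at $|w|=1/2$, use Lemma 2.6 on the outer annulus (together with $\log\frac1{|w|}\approx 1-|w|^2$ there) and the pointwise bound $|g'(w)|\lesssim\|g\|_{A^2}$ on the inner disk, and obtain the ``in particular'' statement by the substitution $\tilde g=f\circ\sigma_{\varphi(a)}-f(\varphi(a))$, $\tilde\phi=\sigma_{\varphi(a)}\circ\varphi\circ\sigma_a$. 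All of that matches the paper and is fine. The one step where you diverge is the verification of the hypothesis of Lemma 2.6, namely $\sup_{0<|w|<1}|w|^2N_{\phi,2}(w)\lesssim\|\phi\|_{A^2}^2$, and there your argument has a genuine gap that you flag as ``the main technical obstacle'' but do not actually close.

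The sub-mean-value inequality for $N_{\phi,2}$ on $B(w,|w|/2)$ does give $|w|^2N_{\phi,2}(w)\lesssim\int_\DD N_{\phi,2}\,dA\approx\|\phi\|_{A^2}^2$, but only for $|w|\le 2/3$. For $|w|>2/3$, Littlewood's inequality yields $N_{\phi,2}(w)\le(\log\frac1{|w|})^2$, hence $|w|^2N_{\phi,2}(w)\lesssim(1-|w|)^2\le C$ with an \emph{absolute} constant; this is not $\lesssim\|\phi\|_{A^2}^2$, because $\|\phi\|_{A^2}$ can be arbitrarily small while $\phi$ still attains values near the boundary (e.g.\ $\phi(z)=z^n$ with $n$ large has $\|\phi\|_{A^2}^2=\frac1{n+1}$). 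Interpolating between the Littlewood bound and the sub-mean-value bound over $B(w,1-|w|)$ only produces $N_{\phi,2}(w)\lesssim\|\phi\|_{A^2}$, which is still one power short. The device that closes the estimate uniformly in $w$ --- and the reason Lemma 2.5 is stated for maps with $\varphi(0)\ne 0$ and radii $r<|\varphi(0)|$ --- is the M\"obius shift used in the paper: since $N_{\phi,2}(z)=N_{\sigma_z\circ\phi,2}(0)$ and $(\sigma_z\circ\phi)(0)=z$, Lemma 2.5 applied to $\sigma_z\circ\phi$ with $r\uparrow|z|$ gives
\[
|z|^2N_{\phi,2}(z)\le\int_{|z|\DD}N_{\sigma_z\circ\phi,2}\,dA\le\int_{\DD}N_{\sigma_z\circ\phi,2}\,dA\approx\|\sigma_z\circ\phi-z\|_{A^2}^2=\int_\DD\frac{(1-|z|^2)^2|\phi|^2}{|1-\bar z\phi|^2}\,dA\le 4\|\phi\|_{A^2}^2
\]
for every $z\in\DD\setminus\{0\}$. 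Replacing your boundary case with this argument completes the proof; the rest of your write-up can stand as is.
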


\begin{proof}
Let $\phi\in S(\DD)$ such that  $\phi(0)=0$. Then,
\begr
\|\sigma_z\circ \phi-\sigma_z(\phi(0))\|_{A^2}^2=
\int_{\DD}\frac{(1-|z|^2)^2|\phi(w)|^2}{|1-\bar{z}\phi(w)|^2}dA(w)
\leq 4\|\phi\|_{A^2}^2.
\endr
From Lemmas 2.3 and 2.4,  and (2.5) we obtain
\begr
\|\sigma_z\circ \phi-\sigma_z(\phi(0))\|_{A^2}^2&=&
\int_{\DD}|(\sigma_z\circ\phi )'|^2(\log\frac{1}{|w|})^2dA(w)\nonumber\\
&=&\int_{\DD} N_{\sigma_z\circ\phi,2} dA(w)\leq 4\|\phi\|_{A^2}^2.
\endr
For $z\in\DD\setminus\{0\}$, from Lemma 4.2 in \cite{Sw1996} and Lemma 2.5, we have
\begr
|z|^2N_{\phi,2}(z)= |z|^2N_{\sigma_z\circ\phi,2}(0) \leq \int_{|z|\DD} N_{\sigma_z\circ\phi,2}(w) dA(w)\leq 4\|\phi\|_{A^2}^2.
\endr
So, by Lemma 2.6 we get
\begr
 N_{\phi,2}(z)  \leq \frac{16}{(\log2)^2}\|\phi\|_{A^2}^2(\log\frac{1}{|z|})^2,
\endr
for $z\in\DD\setminus \frac{1}{2}\DD $. Thus,
\begr
\int_{\DD\setminus \frac{1}{2}\DD }|g'(z)|^2N_{\phi,2}(z) dA(z)\leq \frac{16}{(\log2)^2}\|\phi\|_{A^2}^2\|g\|_{A^2}^2.
\endr
In addition, for $z\in\DD$ and $g\in A^2$, from Theorems 4.14 and 4.28 of \cite{zhu2}, we have $|g'(z)|\leq (1-|z|^2)^{-2}\|g\|_{A^2}$. Then,
\begr
&&\int_{\frac{1}{2}\DD}|g'(z)|^2N_{\phi,2}(z)dA(z)\leq 16\|g\|_{A^2}^2\int_{\frac{1}{2}\DD}N_{\phi,2}(z)dA(z)
\leq16\|\phi\|_{A^2}^2\|g\|_{A^2}^2.
\endr
Since  $g(0)=0$,  by Lemma 2.4 we have
\begr
\|g\circ \phi\|_{A^2}^2 \approx \int_{\DD}|g'(z)|^2N_{\phi,2}(z) dA(z).
\endr
Combine with (2.9), (2.10) and (2.11), we   obtain
\begr
\|g\circ \phi\|_{A^2} \lesssim  \|\phi\|_{A^2}\|g\|_{A^2} ,\nonumber
\endr
as desired.  In particular, for all $f\in\B$, $a\in\DD$ and $\varphi\in S(\DD)$, if we set $$g= f\circ \sigma_{\varphi(a)}-f(\varphi(a)), ~~~\phi=\sigma_{\varphi(a)} \circ \varphi \circ \sigma_a,$$
 we get
 \begr
      \|f\circ\varphi\circ\sigma_a-f(\varphi(a))\|_{A^2} \lesssim    \|\sigma_{\varphi(a)}\circ\varphi\circ\sigma_a\|_{A^2} \|f\circ\sigma_a-f(a)\|_{A^2}. \nonumber
\endr
   The proof is complete.  \end{proof}

For the simplicity of the rest of this paper, we introduce the following abbreviation. Set
 \[
\alpha(\psi,\varphi,a)=|\psi(a)|\cdot\|\sigma_{\varphi(a)}\circ\varphi\circ\sigma_a\|_{A^2},
\]
\[
\beta(\psi,\varphi,a)= \log\frac{2}{1-|\varphi(a)|^2} \|\psi\circ\sigma_a-\psi(a)\|_{A^2},
\]
where $a\in \DD$, $\psi\in H(\DD)$ and $\varphi\in S(\DD)$.

\begin{thm} {\it
Let $\psi\in H(\DD)$ and $\varphi\in S(\DD)$. Then
\begr
\|uC_\varphi\|_{\B \rightarrow \B}\approx |\psi(0)|\log\frac{2}{1-|\varphi(0)|^2}+
\sup_{a\in \DD}\alpha(\psi,\varphi,a)+\sup_{a\in \DD}\beta(\psi,\varphi,a).\nonumber
\endr}
\end{thm}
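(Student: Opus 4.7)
The plan is to use the characterization $\|g\|_\mathcal{B} \approx |g(0)| + \sup_{a\in\DD}\|g\circ\sigma_a - g(a)\|_{A^2}$ from \cite{ax} and argue the upper and lower bounds separately, with Lemmas 2.1, 2.2, and 2.7 as the principal tools.

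For the upper bound, applied to $g = uC_\varphi f$, the point value at $0$ immediately yields the $|\psi(0)|\log\frac{2}{1-|\varphi(0)|^2}$ contribution via Lemma 2.1. For each $a\in\DD$ I perform the telescoping decomposition
\begin{equation*}
\psi(\sigma_a z)f(\varphi(\sigma_a z)) - \psi(a)f(\varphi(a)) = \psi(\sigma_a z)\bigl[f\circ\varphi\circ\sigma_a - f(\varphi(a))\bigr](z) + [\psi(\sigma_a z) - \psi(a)]f(\varphi(a)).
\end{equation*}
The $A^2$-norm of the second summand is at most $|f(\varphi(a))|\,\|\psi\circ\sigma_a - \psi(a)\|_{A^2}$, which Lemma 2.1 converts to $\beta(\psi,\varphi,a)\|f\|_\mathcal{B}$. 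For the first summand I further split $\psi\circ\sigma_a = \psi(a) + (\psi\circ\sigma_a - \psi(a))$: the $\psi(a)$-piece combined with Lemma 2.7 contributes $\alpha(\psi,\varphi,a)\|f\|_\mathcal{B}$, while the cross term is dispatched by H\"older's inequality in $A^4\times A^4$. Lemma 2.2 rewrites $\|\psi\circ\sigma_a - \psi(a)\|_{A^4}$ as a multiple of $\|\psi\|_\beta$, and $\|f\circ\varphi\circ\sigma_a - f(\varphi(a))\|_{A^4}$ as a multiple of $\|f\circ\varphi\|_\beta\leq\|f\|_\beta$ (Schwarz--Pick). Since $\log\frac{2}{1-|\varphi(a)|^2}\geq\log 2$, one has $\|\psi\|_\beta \lesssim \sup_a\beta(\psi,\varphi,a)$, closing the upper estimate.

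For the lower bound I deploy four families of unit-Bloch test functions. First, $f\equiv 1$ gives $\|\psi\|_\mathcal{B}\leq\|uC_\varphi\|_{\mathcal{B}\to\mathcal{B}}$, a preliminary bound needed to absorb error terms later. Second, $f(z)=\log\frac{2}{1-\overline{\varphi(0)}z}$ evaluated at $0$ recovers $|\psi(0)|\log\frac{2}{1-|\varphi(0)|^2}$. Third, to extract $\alpha(\psi,\varphi,a)$, I test against $f_a = \sigma_{\varphi(a)}$; since $f_a(\varphi(a)) = 0$, the translated difference reduces to $(\psi\circ\sigma_a)\cdot\phi_a$ with $\phi_a := \sigma_{\varphi(a)}\circ\varphi\circ\sigma_a \in S(\DD)$. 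The pointwise inequality $|A+B|^2\geq\frac{1}{2}|A|^2 - 2|B|^2$ with $A=\psi(a)$ and $B=\psi(\sigma_a z)-\psi(a)$, integrated against $|\phi_a(z)|^2 dA(z)$ and combined with H\"older plus Lemma 2.2 (using $\|\phi_a\|_{A^4}\leq 1$), yields $\alpha(\psi,\varphi,a) \lesssim \|uC_\varphi\|_{\mathcal{B}\to\mathcal{B}} + \|\psi\|_\beta \lesssim \|uC_\varphi\|_{\mathcal{B}\to\mathcal{B}}$. Fourth, for $\beta(\psi,\varphi,a)$ I take $f_a(z)=\log\frac{2}{1-\overline{\varphi(a)}z}$ and apply the same telescoping decomposition as in the upper bound: the $(\psi\circ\sigma_a - \psi(a))f_a(\varphi(a))$ piece isolates $\beta(\psi,\varphi,a)$, while the remainder is dominated by a constant times $\alpha(\psi,\varphi,a)+\|\psi\|_\beta$, which the previous steps already control.

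The main obstacle is the recurring cross product $(\psi\circ\sigma_a - \psi(a))\cdot h$: because $\psi$ need not be bounded, it cannot be pulled out in sup-norm. The uniform remedy is H\"older's inequality in $A^4\times A^4$ combined with Lemma 2.2, which returns the relevant factors to a multiple of $\|\psi\|_\beta$. A secondary subtlety is the order of the lower-bound argument: the estimate for $\beta(\psi,\varphi,a)$ requires the one for $\alpha(\psi,\varphi,a)$, which in turn rests on the preliminary bound $\|\psi\|_\beta\leq\|uC_\varphi\|_{\mathcal{B}\to\mathcal{B}}$, so the test functions must be deployed in the stated sequence.
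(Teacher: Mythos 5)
Your proposal is correct and follows essentially the same route as the paper: the Axler-type characterization $\|g\|_{\B}\approx |g(0)|+\sup_{a}\|g\circ\sigma_a-g(a)\|_{A^2}$, the same telescoping decomposition handled by Lemmas 2.1, 2.2 and 2.7 with H\"older in $A^4\times A^4$ for the cross term, and the same test functions ($f\equiv 1$, $\sigma_{\varphi(a)}$ up to the additive constant $-\varphi(a)$, and $\log\frac{2}{1-\overline{\varphi(a)}z}$) deployed in the same order for the lower bound. The only cosmetic difference is your use of the pointwise inequality $|A+B|^2\geq\frac12|A|^2-2|B|^2$ where the paper simply applies the triangle inequality.
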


\begin{proof}
First we give the upper estimate for $\|uC_\varphi\|_{\B \rightarrow \B}$.  For all $f\in \B$, using the triangle inequality, we get
\begr
&&\|(uC_\varphi f)\circ\sigma_a-(uC_\varphi f)(a)\|_{A^2}\nonumber\\
&=&\|(\psi\circ\sigma_a-\psi(a))\cdot(f\circ\varphi\circ\sigma_a-f(\varphi(a)))\nonumber\\
&&+\psi(a)(f\circ\varphi\circ\sigma_a-f(\varphi(a)))
+(\psi\circ\sigma_a-\psi(a))f(\varphi(a))\|_{A^2}\nonumber\\
&\leq&\|(\psi\circ\sigma_a-\psi(a))\cdot(f\circ\varphi\circ\sigma_a-f(\varphi(a)))\|_{A^2}\nonumber\\
&&+|\psi(a)|\|f\circ\varphi\circ\sigma_a-f(\varphi(a))\|_{A^2}+
|f(\varphi(a))|\|\psi\circ\sigma_a-\psi(a)\|_{A^2}.
\endr
 By Lemmas 2.1 and 2.7, we have
\begr
&&|\psi(a)|\|f\circ\varphi\circ\sigma_a-f(\varphi(a))\|_{A^2}+
|f(\varphi(a))|\|\psi\circ\sigma_a-\psi(a)\|_{A^2}\nonumber\\
&\lesssim&  \alpha(\psi,\varphi,a)|\|f\circ\sigma_a-f(a)\|_{A^2}+
\log\frac{2}{1-|\varphi(a)|^2}\|\psi\circ\sigma_a-\psi(a)\|_{A^2}  \|f \|_{\B}\nonumber \\
 &\lesssim& \big(\alpha(\psi,\varphi,a)+\beta(\psi,\varphi,a) \big) \|f \|_{\B} .
\endr
From Lemmas 2.1 and 2.2,  we get
\begr
&&\sup_{a\in \DD}\|(\psi\circ\sigma_a-\psi(a))\cdot(f\circ\varphi\circ\sigma_a-f(\varphi(a)))\|_{A^2}\nonumber\\
&\lesssim & \sup_{a\in \DD}\log2\|\psi\circ\sigma_a-\psi(a)\|_{A^2}\|f\circ\varphi\circ\sigma_a-f(\varphi(a))\|_{A^2}\nonumber\\
&\lesssim & \sup_{a\in \DD}\log\frac{2}{1-|\varphi(a)|^2}\|\psi\circ\sigma_a-\psi(a)\|_{A^2}\|f\circ\varphi\circ\sigma_a-f(\varphi(a))\|_{A^2}\nonumber\\
&\lesssim& \sup_{a\in \DD}\beta(\psi,\varphi,a)\|f\circ \varphi\|_{\B} \lesssim \sup_{a\in \DD}\beta(\psi,\varphi,a)\|f\|_{\B}.
\endr
Then, by (2.12), (2.13) and (2.14), we have
\begr
\sup_{a\in \DD}\|(uC_\varphi f)\circ\sigma_a-(uC_\varphi f)(a)\|_{A^2}\lesssim \big(\sup_{a\in \DD}\alpha(\psi,\varphi,a)
+\sup_{a\in \DD}\beta(\psi,\varphi,a)\big)\|f\|_{\B}.\nonumber
\endr
In addition, by Lemma 2.1, $|(uC_\varphi f)(0)|\lesssim |\psi(0)|\log\frac{2}{1-|\varphi(0)|^2}\|f\|_{\B}, $  we get
    \begr
 \|uC_\varphi f\|_\B &\approx & |(uC_\varphi f)(0)|+ \sup_{a\in \DD}\|(uC_\varphi f)\circ\sigma_a-(uC_\varphi f)(a)\|_{A^2} \nonumber\\
&\lesssim &  |\psi(0)|\log\frac{2}{1-|\varphi(0)|^2}\|f\|_{\B}+
\sup_{a\in \DD}\alpha(\psi,\varphi,a)\|f\|_{\B}+\sup_{a\in \DD}\beta(\psi,\varphi,a)\|f\|_{\B}\nonumber,
\endr
 which implies
   \begr
&&\|uC_\varphi\|_{\B \rightarrow \B}\lesssim |\psi(0)|\log\frac{2}{1-|\varphi(0)|^2}+
\sup_{a\in \DD}\alpha(\psi,\varphi,a)+\sup_{a\in \DD}\beta(\psi,\varphi,a).
\endr

Next we find the lower estimate for $\|uC_\varphi\|_{\B \rightarrow \B}$.  Let $f=1$. It is easy to see that $\|\psi\|_{\B}\leq \|uC_\varphi\|_{\B \rightarrow \B}$. For any $a\in \DD$, set
\begr
 f_a(z)=\sigma_{\varphi(a)}(z)-\varphi(a),\,\,\,\,\,\,\, z\in\DD.
\endr
Then, $f_a(0)=0$, $f_a(\varphi(a))=-\varphi(a)$, $\|f_a\|_{\B}\leq 4$ and $\|f_a\|_{\infty}\leq 2$. Using triangle inequality, we get
\begr
\alpha(\psi,\varphi,a)&=&|\psi(a)|\cdot\|\sigma_{\varphi(a)}\circ\varphi\circ\sigma_a-\varphi(a)+\varphi(a) \|_{A^2}\nonumber\\
&=& \|\psi(a)\cdot(f_a\circ\varphi\circ\sigma_a-f_a(\varphi(a)))\|_{A^2}\nonumber   \\
&\leq&\|(\psi\circ\sigma_a-\psi(a))\cdot f_a\circ\varphi\circ\sigma_a\|_{A^2}\nonumber\\
&&+\|(\psi\circ\sigma_a)\cdot f_a\circ\varphi\circ\sigma_a-\psi(a)f_a(\varphi(a))\|_{A^2}\\
&\leq&2\|\psi\circ\sigma_a-\psi(a)\|_{A^2}+\|(uC_\varphi f_a)\circ\sigma_a-(uC_\varphi f_a)(a)\|_{A^2}\nonumber\\
&\leq&2\|u \|_\B+4\|uC_\varphi\|_{\B \rightarrow \B}  \leq 6\|uC_\varphi\|_{\B \rightarrow \B} \nonumber.
\endr
Set
\begr
 h_a(z)=\log\frac{2}{1-\overline{\varphi(a)}z},\,\,\,\,\,\, z\in\DD.
\endr
Then, $h_a\in \B$, $h_a(\varphi(a))=\log\frac{2}{1-|\varphi(a)|^2}$ and   $\sup_{a\in \DD} \|h_a \|_{\B} \leq 2+\log 2$.
Using triangle inequality and Lemma 2.7, we obtain
\begr
\beta(\psi,\varphi,a)&=& \|\log\frac{2}{1-|\varphi(a)|^2}\cdot (\psi\circ\sigma_a-\psi(a))\|_{A^2}\nonumber\\
&=& \|h_a(\varphi(a))(\psi\circ\sigma_a-\psi(a))\|_{A^2}\nonumber\\
&\leq&\|(h_a\circ\varphi\circ\sigma_a-h_a(\varphi(a)))\cdot (\psi\circ\sigma_a-\psi(a))\|_{A^2}\nonumber\\
&&+ \| (\psi\circ\sigma_a)\cdot h_a\circ\varphi\circ\sigma_a- \psi(a)h_a(\varphi(a))\|_{A^2} \\
&&+ \|\psi(a) ( h_a\circ\varphi\circ\sigma_a- h_a(\varphi(a))    )  \|_{A^2} \nonumber\\
& \lesssim &\|(h_a\circ\varphi\circ\sigma_a-h_a(\varphi(a)))\cdot (\psi\circ\sigma_a-\psi(a))\|_{A^2}\nonumber\\
&&+\|(uC_\varphi h_a)\circ\sigma_a-(uC_\varphi h_a)(a)\|_{A^2}+ \alpha(\psi,\varphi,a)\|h_a\circ\sigma_a-h_a(a)\|_{A^2}\nonumber\\
& \lesssim &\|(h_a\circ\varphi\circ\sigma_a-h_a(\varphi(a)))\cdot (\psi\circ\sigma_a-\psi(a))\|_{A^2}\nonumber \\
&&+(2+\log 2)\|uC_\varphi  \|_{\B \rightarrow \B} + (2+\log 2)\alpha(\psi,\varphi,a)\nonumber .
\endr
By Lemmas 2.2 and 2.7, we have
\begr
&&\|(h_a\circ\varphi\circ\sigma_a-h_a(\varphi(a)))\cdot (\psi\circ\sigma_a-\psi(a))\|_{A^2}\nonumber\\
& \lesssim &  \|(h_a\circ\varphi\circ\sigma_a-h_a(\varphi(a)))\|_{A^2}\|\psi\circ\sigma_a-\psi(a)\|_{A^2} \nonumber\\
&\leq & \|h_a\circ\varphi \|_{\B} \|\psi\|_{\B}
\lesssim \|uC_\varphi\|_{\B \rightarrow \B}.
\endr
Combining (2.17), (2.19) and (2.20), we have
\begr
\sup_{a\in \DD}\alpha(\psi,\varphi,a)+\sup_{a\in \DD}\beta(\psi,\varphi,a)\lesssim \|uC_\varphi\|_{\B \rightarrow \B}\nonumber.
\endr
Moreover, since
\begr
|\psi(0)|\log\frac{2}{1-|\varphi(0)|^2}=|(uC_\varphi h_{0})(0)| \leq  (2+\log 2)  \|uC_\varphi\|_{\B \rightarrow \B} \lesssim  \|uC_\varphi\|_{\B \rightarrow \B}\nonumber.
\endr
Therefore,
\begr
 &&|\psi(0)|\log\frac{2}{1-|\varphi(0)|^2}+\sup_{a\in \DD}\alpha(\psi,\varphi,a)+\sup_{a\in \DD}\beta(\psi,\varphi,a)
\lesssim  \|uC_\varphi\|_{\B \rightarrow \B}.\nonumber
\endr
 We complete the proof of the theorem.
\end{proof}

As a corollary, we obtain the following new characterization of the boundedness of $uC_\varphi:\B \rightarrow \B.$

\begin{cor} {\it
Let $\psi\in H(\DD)$ and $\varphi\in S(\DD)$. Then  $uC_\varphi:\B \rightarrow \B$ is bounded  if and only if
\begr
 \sup_{a\in \DD}|\psi(a)|\cdot\|\sigma_{\varphi(a)}\circ\varphi\circ\sigma_a \|_{A^2}  <\infty \,\,\,\nonumber\endr
 and
   \begr \sup_{a\in \DD} \log\frac{2}{1-|\varphi(a)|^2} \|\psi\circ\sigma_a-\psi(a)\|_{A^2}   <\infty\nonumber.
\endr}
\end{cor}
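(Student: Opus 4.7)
The plan is to derive this corollary as an immediate consequence of Theorem 2.8. That theorem already gives the two-sided equivalence
\[\|uC_\varphi\|_{\B \rightarrow \B}\approx |\psi(0)|\log\frac{2}{1-|\varphi(0)|^2}+\sup_{a\in \DD}\alpha(\psi,\varphi,a)+\sup_{a\in \DD}\beta(\psi,\varphi,a),\]
so boundedness of $uC_\varphi:\B\to\B$ is tantamount to finiteness of the right-hand side; the corollary is just the translation of this into a criterion involving only the two supremum terms.

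First I would note that the leading summand $|\psi(0)|\log\frac{2}{1-|\varphi(0)|^2}$ is automatically finite for any admissible pair $(\psi,\varphi)$: since $\psi\in H(\DD)$ the value $\psi(0)$ is a fixed complex number, and since $\varphi\in S(\DD)$ we have $|\varphi(0)|<1$, keeping the logarithm finite. Consequently the right-hand side of the equivalence is finite if and only if both $\sup_{a\in\DD}\alpha(\psi,\varphi,a)$ and $\sup_{a\in\DD}\beta(\psi,\varphi,a)$ are finite.

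For the forward implication I would invoke the lower-bound half of Theorem 2.8,
\[\sup_{a\in \DD}\alpha(\psi,\varphi,a)+\sup_{a\in \DD}\beta(\psi,\varphi,a)\lesssim \|uC_\varphi\|_{\B \rightarrow \B},\]
so that boundedness of $uC_\varphi$ forces each of the two suprema to be finite. For the converse I would use the upper-bound half,
\[\|uC_\varphi\|_{\B \rightarrow \B}\lesssim |\psi(0)|\log\frac{2}{1-|\varphi(0)|^2}+\sup_{a\in \DD}\alpha(\psi,\varphi,a)+\sup_{a\in \DD}\beta(\psi,\varphi,a),\]
where the first summand is a finite constant depending only on the fixed data $(\psi,\varphi)$ and the remaining two are finite by hypothesis; hence $\|uC_\varphi\|_{\B\to\B}<\infty$.

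There is no genuine obstacle to overcome here: the analytic content, which comprises the test-function constructions $f_a=\sigma_{\varphi(a)}-\varphi(a)$ and $h_a(z)=\log\frac{2}{1-\overline{\varphi(a)}z}$ together with the decomposition estimates based on Lemmas 2.1, 2.2 and 2.7, has already been absorbed into the proof of Theorem 2.8. Thus the only work in this corollary is the short bookkeeping observation that the $|\psi(0)|\log\frac{2}{1-|\varphi(0)|^2}$ term contributes nothing to the qualitative finiteness question and may therefore be omitted from the characterization.
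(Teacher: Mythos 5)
Your proposal is correct and matches the paper's (implicit) argument exactly: the corollary is stated as an immediate consequence of Theorem 2.8, and your observation that the term $|\psi(0)|\log\frac{2}{1-|\varphi(0)|^2}$ is automatically finite (since $|\varphi(0)|<1$) is precisely the bookkeeping needed to reduce the boundedness criterion to the finiteness of the two suprema. Nothing further is required.
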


In particular, when $\varphi(z)=z$, we obtain the estimate of the norm of the multiplication $M_u: \B \rightarrow \B$.
\begin{cor} {\it
Let $\psi\in H(\DD)$. Then
 $$
  \|M_u\|_{\B \rightarrow \B} \approx |\psi(0)|\log2+\sup_{a\in \DD}  \log\frac{2}{1-|a|^2} \|\psi\circ\sigma_a-\psi(a)\|_{A^2} .
  $$}
\end{cor}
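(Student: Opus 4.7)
The plan is to deduce this directly from Theorem 2.8 by specializing to $\varphi(z)=z$, so that $uC_\varphi = M_u$. All the terms of Theorem 2.8 simplify in this case: $\varphi(0)=0$ gives $\log\frac{2}{1-|\varphi(0)|^2}=\log 2$, which accounts for the first term on the right-hand side of the corollary, while $\varphi(a)=a$ means that $\beta(\psi,\varphi,a)$ coincides literally with the supremum appearing on the right-hand side of the claim.

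The key simplification is the evaluation of $\alpha(\psi,\varphi,a)$. Because $\sigma_a$ is an involution,
$$\sigma_{\varphi(a)}\circ\varphi\circ\sigma_a \;=\; \sigma_a\circ\sigma_a \;=\; \mathrm{id},$$
so $\|\sigma_{\varphi(a)}\circ\varphi\circ\sigma_a\|_{A^2}=\|z\|_{A^2}$ is an absolute constant; hence $\alpha(\psi,\varphi,a)\approx|\psi(a)|$. Plugging both quantities into Theorem 2.8, I obtain
$$\|M_u\|_{\B\to\B}\;\approx\;|\psi(0)|\log 2 \;+\; \sup_{a\in\DD}|\psi(a)| \;+\; \sup_{a\in\DD}\log\tfrac{2}{1-|a|^2}\,\|\psi\circ\sigma_a-\psi(a)\|_{A^2}.$$

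The remaining task, and the only non-routine step in the argument, is to absorb the middle supremum $\sup_a|\psi(a)|$ into the other two terms. For this I would test $M_u$ against the family $h_a(z)=\log\frac{2}{1-\bar a z}$, which are uniformly bounded in $\B$ (one has $\|h_a\|_\B\lesssim 1$) and satisfy $h_a(a)=\log\frac{2}{1-|a|^2}$; this is exactly the family used in the lower bound of Theorem 2.8. Evaluating $M_u h_a$ at the point $a$ and invoking Lemma 2.1 gives an estimate of the form $|\psi(a)|\log\frac{2}{1-|a|^2}\lesssim \log\frac{2}{1-|a|^2}\,\|M_u h_a\|_\B$, which after dividing by $\log\frac{2}{1-|a|^2}$ compares $|\psi(a)|$ with the $\beta$-supremum and $|\psi(0)|\log 2$. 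I expect this final comparison to require a small amount of bookkeeping, analogous to that done in the proof of Theorem 2.8 when $\beta(\psi,\varphi,a)$ was bounded below via $h_a$; once it is carried out, the displayed estimate of Theorem 2.8 collapses to the two-term form stated in the corollary.
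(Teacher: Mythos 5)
Your reduction to Theorem 2.8 is correct as far as it goes: for $\varphi(z)=z$ one has $\sigma_{\varphi(a)}\circ\varphi\circ\sigma_a=\sigma_a\circ\sigma_a=\mathrm{id}$, so $\|\sigma_{\varphi(a)}\circ\varphi\circ\sigma_a\|_{A^2}=\|z\|_{A^2}=2^{-1/2}$ and $\alpha(u,\varphi,a)=2^{-1/2}|u(a)|$, which turns Theorem 2.8 into the three-term estimate
\[
\|M_u\|_{\B\to\B}\approx |u(0)|\log 2+\sup_{a\in\DD}|u(a)|+\sup_{a\in\DD}\log\frac{2}{1-|a|^2}\,\|u\circ\sigma_a-u(a)\|_{A^2}.
\]
This is also all the paper does: the corollary is stated without proof, as an immediate specialization. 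The gap is exactly the ``absorption'' step you single out and postpone. Testing against $h_a(z)=\log\frac{2}{1-\bar a z}$ and using Lemma 2.1 gives $|u(a)|\log\frac{2}{1-|a|^2}=|(M_uh_a)(a)|\lesssim \log\frac{2}{1-|a|^2}\,\|M_uh_a\|_{\B}$, i.e.\ $\sup_a|u(a)|\lesssim\|M_u\|_{\B\to\B}$. That bounds the middle term by the \emph{left}-hand side of the corollary, which is what one needs for the lower estimate (already guaranteed by the three-term formula), but it is useless for the upper estimate: substituting it back yields $\|M_u\|_{\B\to\B}\lesssim |u(0)|\log 2+\|M_u\|_{\B\to\B}+\sup_a\beta(u,\mathrm{id},a)$, and the $\|M_u\|_{\B\to\B}$ on the right cannot be cancelled because the implied constants are not less than $1$. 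So the step you flag as ``non-routine'' is not merely unfinished; the argument you sketch for it runs in the wrong direction.

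Moreover, no amount of bookkeeping can close it, because the needed inequality $\sup_a|u(a)|\lesssim |u(0)|\log 2+\sup_a\beta(u,\mathrm{id},a)$ is false. Take $u(z)=\log\log\frac{e^{e}}{1-z}$, so that $|u'(z)|\approx \big(|1-z|\log\frac{e}{|1-z|}\big)^{-1}$. Using Lemma 2.3 and the change of variables $w=\sigma_a(z)$ one checks that $\|u\circ\sigma_a-u(a)\|_{A^2}\lesssim \big(\log\frac{2}{1-|a|^2}\big)^{-1}$, hence $\sup_a\beta(u,\mathrm{id},a)<\infty$; yet $u\notin H^\infty$, so $\sup_a|u(a)|=\infty$ and, by the very test-function estimate above, $M_u$ is unbounded on $\B$. (This is consistent with the Ohno--Stroethoff--Zhao characterization: a multiplier of $\B$ must be bounded, and the logarithmic Bloch-type condition alone does not force boundedness, since $\int_0^1 \big((1-t)\log\frac{e}{1-t}\big)^{-1}dt=\infty$.) Thus the two-term right-hand side can be finite while $\|M_u\|_{\B\to\B}=\infty$: the corollary as printed is missing the term $\sup_{a\in\DD}|u(a)|$ (equivalently $\|u\|_{H^\infty}$), and the correct conclusion of your computation is the three-term estimate displayed above, not the two-term one you are asked to prove.
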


\begin{lem} {\it Suppose that $uC_\varphi:\B \rightarrow \B$ is bounded.  Then
\begr
 \sup_{a\in \DD}\|uC_\varphi (\sigma_{\varphi(a)}-\varphi(a)) \|_{\B} \approx   \sup_{n\geq 0}\| \psi\varphi^n \|_{\B}
\endr
and
\begr
\limsup_{|\varphi(a)|\rightarrow 1}\|  uC_\varphi (\sigma_{\varphi(a)}-\varphi(a)) \|_{\B} \lesssim\limsup_{n\rightarrow \infty}\| \psi\varphi^n \|_{\B} .
\endr}
\end{lem}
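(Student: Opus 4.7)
Plan. The proof rests on the identity
\[
\sigma_b(w)-b=-\frac{(1-|b|^2)w}{1-\overline{b}w}=-(1-|b|^2)\sum_{n=1}^{\infty}\overline{b}^{\,n-1}w^n,\qquad b,w\in\DD,
\]
which, on substituting $w=\varphi(z)$ and multiplying by $\psi(z)$, yields the Bloch-valued power series
\[
uC_\varphi(\sigma_b-b)=-(1-|b|^2)\sum_{n=1}^{\infty}\overline{b}^{\,n-1}\,\psi\varphi^n \qquad (b\in\DD), \quad (\ast)
\]
whose coefficients are exactly the quantities appearing on the right-hand side of~(2.21).

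\emph{Upper bound in~(2.21) and the estimate~(2.22).} Taking the $\B$-norm in~$(\ast)$, applying the triangle inequality and summing the resulting geometric series gives
\[
\|uC_\varphi(\sigma_b-b)\|_{\B}\le(1-|b|^2)\sum_{n\ge 1}|b|^{n-1}\|\psi\varphi^n\|_{\B}\le(1+|b|)\sup_{n\ge 0}\|\psi\varphi^n\|_{\B}\le 2\sup_{n\ge 0}\|\psi\varphi^n\|_{\B}.
\]
Taking the supremum over $b=\varphi(a)$ proves the $\lesssim$ direction of~(2.21). For~(2.22), fix $\varepsilon>0$ and use boundedness of $uC_\varphi$ (so that $\sup_n\|\psi\varphi^n\|_{\B}<\infty$) to pick $N$ with $\sup_{n\ge N}\|\psi\varphi^n\|_{\B}\le\limsup_n\|\psi\varphi^n\|_{\B}+\varepsilon$. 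Splitting the series in~$(\ast)$ at~$N$, the first $N-1$ terms contribute at most $(1-|b|^2)(N-1)\sup_n\|\psi\varphi^n\|_{\B}$, which vanishes as $|\varphi(a)|\to 1$, while the tail is bounded by $(1+|b|)(\limsup_n\|\psi\varphi^n\|_{\B}+\varepsilon)$. Letting $|\varphi(a)|\to 1$ and then $\varepsilon\to 0$ yields~(2.22) with absolute constant~$2$.

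\emph{Lower bound in~(2.21).} To invert~$(\ast)$, I extract Fourier coefficients in the angular variable of $b=re^{i\theta}$: for any $n\ge 1$ and $r\in(0,1)$,
\[
\psi\varphi^n=\frac{-1}{(1-r^2)r^{n-1}}\cdot\frac{1}{2\pi}\int_{0}^{2\pi}e^{i(n-1)\theta}\,uC_\varphi(\sigma_{re^{i\theta}}-re^{i\theta})\,d\theta,
\]
and therefore
\[
\|\psi\varphi^n\|_{\B}\le\frac{1}{(1-r^2)r^{n-1}}\sup_{|b|=r}\|uC_\varphi(\sigma_b-b)\|_{\B}.
\]
Choosing $r=r_n=\sqrt{(n-1)/(n+1)}$ makes the prefactor of order~$n$; the case $n=0$ is handled separately via $\psi=uC_\varphi(1)$ and $\|\psi\|_{\B}\lesssim\|uC_\varphi\|$, which is itself controlled by Theorem~2.8.

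\emph{Main obstacle.} The Fourier extraction produces the supremum over the full circle $|b|=r$, whereas the right-hand side of~(2.21) only controls $b\in\varphi(\DD)$. This gap is bridged by combining the uniform bound $\|uC_\varphi(\sigma_b-b)\|_{\B}\le 4\|uC_\varphi\|$ (valid for all $b\in\DD$ since $\|\sigma_b-b\|_{\B}\le 4$, as already used in the proof of Theorem~2.8) with the fact, encoded in Theorem~2.8, that $\|uC_\varphi\|$ is itself controlled by $\sup_a\alpha(\psi,\varphi,a)+\sup_a\beta(\psi,\varphi,a)$ plus the boundary-point term $|\psi(0)|\log\frac{2}{1-|\varphi(0)|^2}$---the latter two being dominated by $\sup_a\|uC_\varphi(\sigma_{\varphi(a)}-\varphi(a))\|_{\B}$ by the test-function arguments carried out in the proof of Theorem~2.8 (using $f_a=\sigma_{\varphi(a)}-\varphi(a)$ and $h_a=\log\tfrac{2}{1-\overline{\varphi(a)}z}$). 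The delicate point is absorbing the growth factor $\sim n$ arising from the optimised prefactor at $r_n\to 1$ into the geometric decay~$|b|^{n-1}$, which is where the boundedness hypothesis on $uC_\varphi$ is used in an essential way.
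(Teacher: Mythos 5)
Your treatment of the upper bound in (2.21) and of all of (2.22) is correct and is essentially the paper's own argument: the paper likewise expands $\sigma_{\varphi(a)}-\varphi(a)=-\sum_{n\geq 0}\overline{\varphi(a)}^{\,n}(1-|\varphi(a)|^2)z^{n+1}$, applies $uC_\varphi$ termwise, and splits the series at an index $N$ to obtain (2.22).

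The lower bound in (2.21), i.e.\ $\sup_{n\geq 0}\|\psi\varphi^n\|_{\B}\lesssim\sup_{a\in\DD}\|uC_\varphi(\sigma_{\varphi(a)}-\varphi(a))\|_{\B}$, is where your argument has a genuine gap. (The paper does not prove this step from scratch: it quotes Corollary 2.1 of \cite{col1}, namely $\sup_{a}\|uC_\varphi\sigma_{\varphi(a)}\|_{\B}\approx\sup_{n\geq 0}\|\psi\varphi^n\|_{\B}$, and removes the constant $\varphi(a)$ at the cost of the $n=0$ term $\|\psi\|_{\B}$.) Your Fourier extraction is algebraically valid, but after optimising $r=r_n$ the prefactor $\big((1-r_n^2)r_n^{\,n-1}\big)^{-1}$ is of order $n$, so what you actually obtain is $\|\psi\varphi^n\|_{\B}\lesssim n\,\sup_{|b|=r_n}\|uC_\varphi(\sigma_b-b)\|_{\B}$. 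There is nothing left to absorb this factor of $n$: the geometric decay $r^{n-1}$ you invoke is precisely what produced it, and the ``delicate point'' you defer is never carried out. The proposed bridge from $\sup_{|b|=r}$ to $\sup_{b\in\varphi(\DD)}$ makes matters worse, since it replaces the right-hand side by $\|uC_\varphi\|_{\B\to\B}$; and $\|uC_\varphi\|_{\B\to\B}$ is \emph{not} dominated by $\sup_a\|uC_\varphi(\sigma_{\varphi(a)}-\varphi(a))\|_{\B}$ --- in the proof of Theorem 2.8 the term $\sup_a\beta(\psi,\varphi,a)$ is recovered from the test functions $h_a=\log\frac{2}{1-\overline{\varphi(a)}z}$, not from $f_a=\sigma_{\varphi(a)}-\varphi(a)$, and Corollary 2.14 shows the $\beta$-condition is an independent requirement for boundedness, so no such domination can hold. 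If you want a self-contained proof of the missing inequality, the effective route is through derivatives rather than Fourier coefficients: since $\sigma_{\varphi(a)}(\varphi(a))=0$, evaluating the Bloch seminorm of $uC_\varphi\sigma_{\varphi(a)}$ at the point $a$ itself yields $(1-|a|^2)|u(a)||\varphi'(a)|/(1-|\varphi(a)|^2)$, and the elementary bound $\sup_{0\leq t\leq 1}nt^{n-1}(1-t^2)\leq C$ applied to $(u\varphi^n)'=u'\varphi^n+nu\varphi^{n-1}\varphi'$ then controls $\|\psi\varphi^n\|_{\B}$ uniformly in $n$ up to an additive $\|\psi\|_{\B}$; this is the substance of the cited result of Colonna.
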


\begin{proof} From Corollary 2.1 of \cite{col1}, we see that
\begr
 \sup_{a\in \DD}\|uC_\varphi \sigma_{\varphi(a)} \|_{\B} \approx   \sup_{n\geq 0}\| \psi\varphi^n \|_{\B}. \nonumber
\endr
Then (2.21)   follows immediately.

The Taylor expansion of $\sigma_{\varphi(a)}-\varphi(a)$ is
\begr
 \sigma_{\varphi(a)}-\varphi(a)=-\sum_{n=0}^\infty \big(\overline{\varphi(a)}\big)^n(1-|\varphi(a)|^2)z^{n+1}\nonumber.
\endr
Then, by the boundedness of  $uC_\varphi:\B \rightarrow \B$   we have
\begr
&& \| uC_\varphi (\sigma_{\varphi(a)}-\varphi(a)) \|_\B   \leq   (1-|\varphi(a)|^2)\sum_{n=0}^\infty |\varphi(a)|^n\|\psi\varphi^{n+1} \|_{\B}\nonumber
 \endr
For each $N$, set
\[M_1=:\sum_{n=0}^N|\varphi(a)|^n\|\psi\varphi^{n+1} \|_{\B}.\]
  Then we get
\begr
&&\| uC_\varphi (\sigma_{\varphi(a)}-\varphi(a)) \|_\B\nonumber\\
&\leq& (1-|\varphi(a)|^2)\sum_{n=0}^N |\varphi(a)|^n\|\psi\varphi^{n+1} \|_{\B} +\big((1-|\varphi(a)|^2)\sum_{n=N+1}^\infty |\varphi(a)|^n\big)\|\psi\varphi^{n+1} \|_{\B}\nonumber\\
&\leq& M_1(1-|\varphi(a)|^2) +\big((1-|\varphi(a)|^2)\sum_{n=N+1}^\infty |\varphi(a)|^n \sup_{n\geq N+1}\|\psi\varphi^{n+1} \|_{\B}\nonumber\\
&\leq& M_1(1-|\varphi(a)|^2)+2\sup_{n\geq N+1}\|\psi\varphi^{n+1} \|_{\B}\nonumber.
\endr
Taking $\limsup_{|\varphi(a)|\rightarrow 1}$ to the last inequality and then letting $N\rightarrow\infty$, we get the desired result.
\end{proof}

\begin{prop} {\it
Let $\varphi\in S(\DD)$ and $\psi\in H(\DD)$. The following statements hold.
\begin{itemize}
\item[(i)] For $a\in \DD$, let $f_a(z)=\sigma_{\varphi(a)}-\varphi(a)$. Then
\begr
\alpha(\psi,\varphi,a)\lesssim \frac{\beta(\psi,\varphi,a)}{\log\frac{2}{1-|\varphi(a)|^2}}+\|(uC_\varphi f_a)\circ\sigma_a-(uC_\varphi f_a)(a)\|_{A^2}\nonumber.
\endr
\item[(ii)] For $a\in \DD$, let $g_a=\frac{h^2_a}{h_a(\varphi(a))}$, where $h_a(z)=\log\frac{2}{1-\overline{\varphi(a)}z}$. Then
\begr
\beta(\psi,\varphi,a)&\lesssim &\alpha(\psi,\varphi,a)+\|(g_a\circ\varphi\circ\sigma_a-g_a(\varphi(a)))\cdot(\psi\circ\sigma_a-\psi(a))\|_{A^2}\nonumber\\
&&+  \|(uC_\varphi g_a)\circ\sigma_a-(uC_\varphi g_a)(a)\|_{A^2}\nonumber.
\endr
\item[(iii)] For all $f\in\B$ and $a\in\DD$,
\begr
\|(uC_\varphi f)\circ\sigma_a-(uC_\varphi f)(a)\|_{A^2}&\lesssim& \|(\psi\circ\sigma_a-\psi(a))\cdot(f\circ\varphi\circ\sigma_a-f(\varphi(a)))\|_{A^2}\nonumber\\
&&+\Big(\alpha(\psi,\varphi,a)+\beta(\psi,\varphi,a)\Big)\|f\|_{\B}\nonumber.
\endr
\item[(iv)]For all $f\in\B$ and $a\in\DD$,
\begr
&&\|(\psi\circ\sigma_a-\psi(a))\cdot(f\circ\varphi\circ\sigma_a-f(\varphi(a)))\|_{A^2}\nonumber\\
&\lesssim&\|f\|_{\B} \min \Big\{\sup_{w\in\DD}\beta(\psi,\varphi,w),\frac{\|uC_\varphi\|_{\B\rightarrow \B}}{\sqrt{\log\frac{2}{1-|\varphi(a)|^2} }}\Big\}\nonumber.
\endr
\end{itemize}}
\end{prop}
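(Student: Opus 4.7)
The plan is to reduce all four parts to the single algebraic identity
\begin{align*}
(uC_\varphi f)\circ\sigma_a - (uC_\varphi f)(a) &= (\psi\circ\sigma_a-\psi(a))(f\circ\varphi\circ\sigma_a - f(\varphi(a))) \\
&\quad + \psi(a)\bigl(f\circ\varphi\circ\sigma_a - f(\varphi(a))\bigr) \\
&\quad + f(\varphi(a))\bigl(\psi\circ\sigma_a - \psi(a)\bigr),
\end{align*}
valid for every $f\in\B$ and every $a\in\DD$; this is just the result of inserting $\pm\psi(a)f(\varphi(a))$ into $(uC_\varphi f)\circ\sigma_a - (uC_\varphi f)(a) = (\psi\circ\sigma_a)(f\circ\varphi\circ\sigma_a)-\psi(a)f(\varphi(a))$. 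For each assertion the strategy is the same: plug a well-chosen $f$ into this identity, rearrange to isolate the target quantity, take $A^2$-norms, and absorb cross terms using Lemmas 2.1, 2.2 and 2.7.

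For (i), take $f=f_a$; since $f_a(\varphi(a))=-\varphi(a)$, the explicit computation $f_a\circ\varphi\circ\sigma_a - f_a(\varphi(a)) = \sigma_{\varphi(a)}\circ\varphi\circ\sigma_a$ shows that the middle term has $A^2$-norm exactly $\alpha(\psi,\varphi,a)$. Solving for this term and bounding the remaining cross term via $\|f_a\|_\infty\le 2$ gives $\|(\psi\circ\sigma_a-\psi(a))f_a\circ\varphi\circ\sigma_a\|_{A^2}\le 2\|\psi\circ\sigma_a-\psi(a)\|_{A^2}=2\beta(\psi,\varphi,a)/\log\frac{2}{1-|\varphi(a)|^2}$, which matches the first term on the right-hand side of (i).

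For (ii), take $f=g_a$; since $g_a(\varphi(a))=\log\frac{2}{1-|\varphi(a)|^2}$, the last term in the identity has $A^2$-norm exactly $\beta(\psi,\varphi,a)$. Rearranging for that term and applying the triangle inequality yields $\|(uC_\varphi g_a)\circ\sigma_a - (uC_\varphi g_a)(a)\|_{A^2}$, the cross term in the statement, and a leftover $|\psi(a)|\|g_a\circ\varphi\circ\sigma_a - g_a(\varphi(a))\|_{A^2}$ that is absorbed into $\alpha(\psi,\varphi,a)$ by Lemma 2.7, provided one proves the uniform bound $\|g_a\|_\B\lesssim 1$; this follows from $(1-|z|^2)|h_a'(z)|\le 2$ (standard M\"obius estimate) combined with $|h_a(z)|\lesssim h_a(\varphi(a))$ uniformly in $a,z$. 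For (iii) the identity is used with general $f$: take $A^2$-norms termwise, use Lemma 2.7 on the $\psi(a)$-term to produce $\alpha(\psi,\varphi,a)\|f\|_\B$, and Lemma 2.1 on the $f(\varphi(a))$-term to produce $\beta(\psi,\varphi,a)\|f\|_\B$.

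Part (iv) needs two separate bounds. Writing $\eta=\psi\circ\sigma_a-\psi(a)$ and $\mu=f\circ\varphi\circ\sigma_a-f(\varphi(a))$, the $\sup_w\beta$ bound follows from H\"older $\|\eta\mu\|_{A^2}\le\|\eta\|_{A^4}\|\mu\|_{A^4}$ together with Lemma 2.2: $\|\eta\|_{A^4}\le\sup_b\|\psi\circ\sigma_b-\psi(b)\|_{A^4}\lesssim\sup_b\|\psi\circ\sigma_b-\psi(b)\|_{A^2}\lesssim\sup_w\beta(\psi,\varphi,w)$ (using $\beta(\psi,\varphi,b)\ge(\log 2)\|\psi\circ\sigma_b-\psi(b)\|_{A^2}$), and $\|\mu\|_{A^4}\lesssim\|f\circ\varphi\|_\B\le\|f\|_\B$ by Schwarz--Pick. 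The second bound, with the factor $1/\sqrt{\log\frac{2}{1-|\varphi(a)|^2}}$, is the main obstacle; my plan is to apply Cauchy--Schwarz in the form
\[
\|\eta\mu\|_{A^2}^2 = \int_{\DD}|\eta|^2|\mu|^2\,dA \le \|\eta\|_{A^2}\|\eta\mu^2\|_{A^2}.
\]
By Theorem 2.8, $\|\eta\|_{A^2} = \beta(\psi,\varphi,a)/\log\frac{2}{1-|\varphi(a)|^2}\lesssim\|uC_\varphi\|_{\B\to\B}/\log\frac{2}{1-|\varphi(a)|^2}$, which supplies the entire $\log$-denominator. A further H\"older gives $\|\eta\mu^2\|_{A^2}\le\|\eta\|_{A^4}\|\mu\|_{A^8}^2$, and Lemma 2.2 applied with $p=4$ and $p=8$ together with $\|\psi\|_\B\le\|uC_\varphi\|_{\B\to\B}$ yields $\|\eta\mu^2\|_{A^2}\lesssim\|uC_\varphi\|_{\B\to\B}\|f\|_\B^2$. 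Multiplying these two estimates and taking square roots produces the required $1/\sqrt{\log\frac{2}{1-|\varphi(a)|^2}}$ factor. The principal technical difficulties are verifying $\|g_a\|_\B\lesssim 1$ uniformly in (ii), and arranging the nested Cauchy--Schwarz/H\"older inequalities in (iv) so that $\|\eta\|_{A^2}$ picks up the $\log$-denominator from Theorem 2.8 while the complementary factor remains $O(\|uC_\varphi\|\|f\|_\B^2)$.
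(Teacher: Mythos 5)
Your proposal is correct and follows essentially the same route as the paper: the same three-term decomposition of $(uC_\varphi f)\circ\sigma_a-(uC_\varphi f)(a)$ specialized to $f=f_a$, $f=g_a$, and general $f$, with Lemma 2.7 absorbing the $\psi(a)$-terms, $\sup_a\|g_a\|_\B\lesssim 1$ verified as you indicate, and the nested H\"older/Cauchy--Schwarz argument in (iv) reducing to $\|\eta\|_{A^2}\|\eta\|_{A^4}\|\mu\|_{A^8}^2$ with the $\log$-denominator extracted from $\|\eta\|_{A^2}=\beta(\psi,\varphi,a)/\log\frac{2}{1-|\varphi(a)|^2}\lesssim\|uC_\varphi\|_{\B\to\B}/\log\frac{2}{1-|\varphi(a)|^2}$ via Theorem 2.8. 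The only cosmetic difference is that the paper obtains both halves of the $\min$ in (iv) from a single chain of inequalities rather than two separate H\"older applications.
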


\begin{proof}
(i) It is easy to see that $\|f_a\circ\varphi\circ\sigma_a\|_\infty\leq 2$. For any $a\in\DD$, we get
\begr
\alpha(\psi,\varphi,a)&=&|\psi(a)|\|f\circ\varphi\circ\sigma_a-f(\varphi(a))\|_{A^2}\nonumber\\
&=&\|(\psi\circ\sigma_a-\psi(a))\cdot f_a\circ\varphi\circ\sigma_a-(uC_\varphi f_a)\circ\sigma_a-(uC_\varphi f_a)(a)\|_{A^2}\nonumber\\
&\lesssim& \|\psi\circ\sigma_a-\psi(a)\|_{A^2}+\|(uC_\varphi f_a)\circ\sigma_a-(uC_\varphi f_a)(a)\|_{A^2}\nonumber\\
&\leq&  \frac{\beta(\psi,\varphi,a)}{\log\frac{2}{1-|\varphi(a)|^2}}+\|(uC_\varphi f_a)\circ\sigma_a-(uC_\varphi f_a)(a)\|_{A^2}\nonumber.
\endr

(ii) It is obvious that $g_a(\varphi(a))=\log\frac{2}{1-|\varphi(a)|^2}$. Since  $(g_a\circ\sigma_{\varphi(a)}-g_a(\varphi(a)))(0)=0$,
\[
g_a\circ\varphi\circ\sigma_a-g_a(\varphi(a))=g_a\circ\sigma_{\varphi(a)}\circ(\sigma_{\varphi(a)}
\circ\varphi\circ\sigma_a)-g_a(\varphi(a)),
\]
   by Lemma 2.7 and the fact that $\sup_{a\in\DD}\|g_a \|_{\B}<\infty$  we obtain
\[
|\psi(a)|\|g_a\circ\varphi\circ\sigma_a-g_a(\varphi(a))\|_{A^2}\lesssim \alpha(\psi,\varphi,a)\sup_{a\in\DD}\|g_a \|_{\B} \lesssim \alpha(\psi,\varphi,a).
\]
 By the triangle inequality we get
\begr
\beta(\psi,\varphi,a)&=& \|g_a(\varphi(a))\cdot (\psi\circ\sigma_a-\psi(a)) \|_{A^2}\nonumber\\
&=&\|(g_a\circ\varphi\circ\sigma_a-g_a(\varphi(a)))\cdot(\psi\circ\sigma_a-\psi(a))\nonumber\\
&&+\psi(a)(g_a\circ\varphi\circ\sigma_a-g_a(\varphi(a)))-(\psi(a)g_a\circ\varphi\circ\sigma_a-\psi(a)g_a(\varphi(a)))\|_{A^2}\nonumber\\
&\leq&\|(g_a\circ\varphi\circ\sigma_a-g_a(\varphi(a)))\cdot(\psi\circ\sigma_a-\psi(a))\|_{A^2}\nonumber\\
&&+|\psi(a)|\|g_a\circ\varphi\circ\sigma_a-g_a(\varphi(a))\|_{A^2}+ \|(uC_\varphi g_a)\circ\sigma_a-(uC_\varphi g_a)(a)\|_{A^2}\nonumber\\
&\lesssim &\alpha(\psi,\varphi,a)+\|(g_a\circ\varphi\circ\sigma_a-g_a(\varphi(a)))\cdot(\psi\circ\sigma_a-\psi(a))\|_{A^2}\nonumber\\
&&+  \|(uC_\varphi g_a)\circ\sigma_a-(uC_\varphi g_a)(a)\|_{A^2}\nonumber,
\endr
as desired.

(iii) See the proof of Theorem 2.8.

(iv) Using the fact that  $\log 2\leq \log\frac{2}{1-|\varphi(a)|^2}$ and Theorem 2.8, we have
 \begr
\sup_{a\in\DD}\|\psi\circ\sigma_a-\psi(a)\|_{A^2}\leq \sup_{a\in\DD}\beta(\psi,\varphi,a)\lesssim \|uC_\varphi\|_{\B\rightarrow \B} .
\endr
 By Lemma 2.2 and H\"{o}lder inequality, we obtain
\begr
&&\|(\psi\circ\sigma_a-\psi(a))\cdot(f\circ\varphi\circ\sigma_a-f(\varphi(a)))\|_{A^2}^2\nonumber\\
&=&\|(\psi\circ\sigma_a-\psi(a))^2(f\circ\varphi\circ\sigma_a-f(\varphi(a)))^2\|_{A^1}\nonumber\\
&\leq&\|\psi\circ\sigma_a-\psi(a)\|_{A^2}\|\psi\circ\sigma_a-\psi(a)\|_{A^4}\|f\circ\varphi\circ\sigma_a-f(\varphi(a))\|_{A^8}^2\nonumber\\
&\leq&\|\psi\circ\sigma_a-\psi(a)\|_{A^2}\sup_{a\in\DD}\|\psi\circ\sigma_a-\psi(a)\|_{A^4}
\sup_{a\in\DD}\|f\circ\varphi\circ\sigma_a-f(\varphi(a))\|_{A^8}^2\nonumber\\
&\lesssim& \beta(\psi,\varphi,a)\sup_{a\in\DD}\|\psi\circ\sigma_a-\psi(a)\|_{A^2} \sup_{a\in\DD}\|f\circ\varphi\circ\sigma_a-f(\varphi(a))\|_{A^2}^2/\log\frac{2}{1-|\varphi(a)|^2}\nonumber.
\endr
Then, by the boundedness of $C_\varphi$ on $\B$ and (2.23), we obtain
\begr
&&\beta(\psi,\varphi,a)\sup_{a\in\DD}\|\psi\circ\sigma_a-\psi(a)\|_{A^2}
\sup_{a\in\DD}\|f\circ\varphi\circ\sigma_a-f(\varphi(a))\|_{A^2}^2/\log\frac{2}{1-|\varphi(a)|^2}\nonumber\\
&\lesssim&(\sup_{a\in\DD}\beta(\psi,\varphi,a))^2\sup_{a\in\DD}\|f\circ\varphi\circ\sigma_a-f(\varphi(a))\|_{A^2}^2/\log\frac{2}{1-|\varphi(a)|^2}\nonumber\\
&\lesssim&\sup_{a\in\DD}\|f\circ\varphi\circ\sigma_a-f(\varphi(a))\|_{A^2}^2\min \bigg\{\sup_{a\in\DD}\beta(\psi,\varphi,a),
\frac{\|uC_\varphi\|_{\B\rightarrow \B}}{\sqrt{\log\frac{2}{1-|\varphi(a)|^2}}}\bigg\}^2\nonumber\\
&\lesssim& \|f\circ\varphi \|_{\B}^2\min \bigg\{\sup_{a\in\DD}\beta(\psi,\varphi,a),
\frac{\|uC_\varphi\|_{\B\rightarrow \B}}{\sqrt{\log\frac{2}{1-|\varphi(a)|^2}}}\bigg\}^2\nonumber\\
&\lesssim& \|f  \|_{\B}^2 \min \bigg\{\sup_{a\in\DD}\beta(\psi,\varphi,a),
\frac{\|uC_\varphi\|_{\B\rightarrow \B}}{\sqrt{\log\frac{2}{1-|\varphi(a)|^2}}}\bigg\}^2\nonumber.
\endr
The proof is complete.
\end{proof}

\begin{thm} {\it
Let $\psi\in H(\DD)$ and $\varphi\in S(\DD)$. Suppose that $uC_\varphi$ is bounded on $\B$. Then
\begr
\|uC_\varphi \|_{\B \rightarrow \B}&\approx & |\psi(0)|\log\frac{2}{1-|\varphi(0)|^2} +
\sup_{n\geq 0}\|\psi\varphi^n \|_{\B}+\sup_{a\in \DD} \beta(\psi,\varphi,a)\nonumber.
\endr}
\end{thm}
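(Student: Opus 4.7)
The plan is to derive the equivalence directly from Theorem 2.8 by showing that, under the standing boundedness hypothesis, the quantity $\sup_{a\in\DD}\alpha(\psi,\varphi,a)$ appearing in Theorem 2.8 can be replaced (up to the two remaining summands) by $\sup_{n\geq 0}\|\psi\varphi^n\|_{\B}$. The two bridges I would use are Lemma 2.11 and part (i) of Proposition 2.12.

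For the bound of the proposed right-hand side by $\|uC_\varphi\|_{\B\to\B}$, Theorem 2.8 already yields the required control of $|\psi(0)|\log\frac{2}{1-|\varphi(0)|^2}$ and $\sup_{a\in\DD}\beta(\psi,\varphi,a)$. For the remaining term I would observe that $\psi\varphi^n = uC_\varphi(z^n)$, so
\[
\|\psi\varphi^n\|_{\B}\leq \|uC_\varphi\|_{\B\to\B}\,\|z^n\|_{\B},
\]
and $\sup_{n\geq 0}\|z^n\|_{\B}$ is a finite absolute constant, which gives $\sup_{n\geq 0}\|\psi\varphi^n\|_{\B}\lesssim \|uC_\varphi\|_{\B\to\B}$.

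The reverse inequality is where the work lies. By Theorem 2.8 it suffices to bound $\sup_{a\in\DD}\alpha(\psi,\varphi,a)$ by the proposed right-hand side. Fix $a\in\DD$ and take the test function $f_a(z)=\sigma_{\varphi(a)}(z)-\varphi(a)$ as in Proposition 2.12 (i). That part of the proposition gives
\[
\alpha(\psi,\varphi,a)\lesssim \frac{\beta(\psi,\varphi,a)}{\log\frac{2}{1-|\varphi(a)|^2}}+\|(uC_\varphi f_a)\circ\sigma_a-(uC_\varphi f_a)(a)\|_{A^2}.
\]
The first summand is dominated by $\sup_{b\in\DD}\beta(\psi,\varphi,b)/\log 2$ since $1-|\varphi(a)|^2\leq 1$. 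For the second summand, the Bloch characterization $\|g\|_{\beta}\approx \sup_{b\in\DD}\|g\circ\sigma_b-g(b)\|_{A^2}$ recalled in the introduction yields
\[
\|(uC_\varphi f_a)\circ\sigma_a-(uC_\varphi f_a)(a)\|_{A^2}\lesssim \|uC_\varphi f_a\|_{\B} = \|uC_\varphi(\sigma_{\varphi(a)}-\varphi(a))\|_{\B},
\]
and Lemma 2.11 then produces $\sup_{a\in\DD}\|uC_\varphi(\sigma_{\varphi(a)}-\varphi(a))\|_{\B}\approx \sup_{n\geq 0}\|\psi\varphi^n\|_{\B}$. Taking the supremum over $a$ closes the reverse estimate.

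The only genuine obstacle is this reverse direction: converting the two-variable quantity $\alpha(\psi,\varphi,a)$ into an expression indexed by $n$. This is precisely the role of Proposition 2.12 (i) together with Lemma 2.11 — the former reduces $\alpha$ to a Bloch-norm of $uC_\varphi$ applied to a concrete test function, while the latter reinterprets that Bloch-norm in terms of $\|\psi\varphi^n\|_{\B}$ via the Taylor expansion of $\sigma_{\varphi(a)}-\varphi(a)$. Once these two ingredients are granted, the rest of the argument is routine bookkeeping.
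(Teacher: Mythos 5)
Your proposal is correct and follows essentially the same route as the paper: the lower estimate via Theorem 2.8 plus the test functions $z^n$, and the upper estimate by replacing $\sup_{a\in\DD}\alpha(\psi,\varphi,a)$ using Proposition 2.12 (i) together with Lemma 2.11. The only cosmetic difference is that the paper re-derives the upper bound $\|uC_\varphi f\|_{\beta}\lesssim \sup_{a\in\DD}\bigl(\alpha(\psi,\varphi,a)+\beta(\psi,\varphi,a)\bigr)\|f\|_{\B}$ from parts (iii) and (iv) of Proposition 2.12 instead of quoting Theorem 2.8 directly, which amounts to the same thing.
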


\begin{proof} For any $f\in \B$, by (iii) and (iv) of Proposition 2.12, we get
\begr
\|uC_\varphi f\|_{\beta}\lesssim \sup_{a\in\DD}\big(\alpha(\psi,\varphi,a)
+\beta(\psi,\varphi,a)\big)\|f\|_{\B}.\nonumber
\endr
By Lemma 2.11 and (i) of Proposition 2.12, we have
\begr
\alpha(\psi,\varphi,a)&\lesssim& \beta(\psi,\varphi,a)/\log\frac{2}{1-|\varphi(a)|^2}  + \sup_{a\in\DD}\|uC_\varphi f_a\|_{\B}\nonumber \\
& \lesssim & \beta(\psi,\varphi,a)+ \sup_{n\geq 0} \|\psi\varphi^n\|_{\B}.\nonumber
\endr
Thus,
\begr
\|uC_\varphi f\|_{\beta}   \lesssim  \big(\sup_{a\in\DD} \beta(\psi,\varphi,a)
+\sup_{n\geq 0}\|\psi\varphi^n\|_{\B}\big)\|f\|_{\B}\nonumber.
\endr
In addition, $(uC_\varphi f)(0)|=|\psi(0)||f(\varphi(0))|\lesssim |\psi(0)||\log\frac{2}{1-|\varphi(0)|^2}\|f\|_{\B}\nonumber.$ Therefore,
 \begr
\|uC_\varphi\|_{\B \rightarrow \B}\lesssim |\psi(0)|\log\frac{2}{1-|\varphi(0)|^2}+\sup_{n\geq 0}\|\psi\varphi^n\|_{\B}+\sup_{a\in \DD}\beta(\psi,\varphi,a).
\nonumber
\endr

On the other hand, let $f(z)=z^n$. Then $f\in\B$ for all $n\geq 0$.
Thus
\[\sup_{n\geq 0}\|\psi\varphi^n\|_{\B}=\sup_{n\geq 0}\|(uC_\varphi)z^n\|_{\B}\leq\|uC_\varphi\|_{\B \rightarrow \B} <\infty, \]
which, together with Theorem 2.8, implies
\begr
|\psi(0)|\log\frac{2}{1-|\varphi(0)|^2}+\sup_n\|\psi\varphi^n\|_{\B}+\sup_{a\in \DD}\beta(\psi,\varphi,a)
\lesssim\|uC_\varphi\|_{\B \rightarrow \B}\nonumber.
\endr
The proof of the theorem is complete.
\end{proof}

\begin{cor} {\it
Let $\psi\in H(\DD)$ and $\varphi\in S(\DD)$. Then  $uC_\varphi:\B \rightarrow \B$ is bounded if and only if
\begr
 \sup_{n\geq 0}\|\psi\varphi^n \|_{\B}<\infty   ~~~~~\mbox{and}~~~~~~~~~~\sup_{a\in \DD}  \log\frac{2}{1-|\varphi(a)|^2} \|\psi\circ\sigma_a-\psi(a)\|_{A^2}<\infty
\nonumber.
\endr}
\end{cor}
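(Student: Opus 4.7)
The plan is to derive Corollary 2.14 directly from Theorem 2.8 combined with the pointwise estimate in Proposition 2.12(i), bypassing Lemma 2.11 (whose statement presupposes boundedness of $uC_\varphi$).

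For necessity, suppose $uC_\varphi$ is bounded on $\B$. Theorem 2.8 immediately yields $\sup_{a\in\DD}\beta(\psi,\varphi,a)\lesssim\|uC_\varphi\|_{\B\rightarrow\B}$, which is the second condition. For the first condition, I would test on the monomials $f_n(z)=z^n$: an elementary calculus computation gives $\sup_{n\geq 0}\|z^n\|_{\B}<\infty$, so
$$\|\psi\varphi^n\|_{\B}=\|uC_\varphi(z^n)\|_{\B}\leq\|uC_\varphi\|_{\B\rightarrow\B}\,\|z^n\|_{\B}\lesssim\|uC_\varphi\|_{\B\rightarrow\B}$$
uniformly in $n$, giving the first condition.

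For sufficiency, assume the two hypotheses. By Theorem 2.8 it suffices to show $\sup_{a\in\DD}\alpha(\psi,\varphi,a)<\infty$, since the constant $|\psi(0)|\log\frac{2}{1-|\varphi(0)|^2}$ is trivially finite. Proposition 2.12(i) applied with $f_a(z)=\sigma_{\varphi(a)}(z)-\varphi(a)$ gives
$$\alpha(\psi,\varphi,a)\lesssim\frac{\beta(\psi,\varphi,a)}{\log\frac{2}{1-|\varphi(a)|^2}}+\|uC_\varphi f_a\|_{\B}.$$
The first summand is bounded by $(\log 2)^{-1}\sup_{a\in\DD}\beta(\psi,\varphi,a)$, finite by hypothesis. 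For the second, I would use the Taylor expansion
$$\sigma_{\varphi(a)}-\varphi(a)=-\sum_{n=0}^\infty\overline{\varphi(a)}^n(1-|\varphi(a)|^2)z^{n+1}$$
that appears in the proof of Lemma 2.11, combined with the Bloch-norm triangle inequality, to obtain
$$\|uC_\varphi f_a\|_{\B}\leq(1-|\varphi(a)|^2)\sum_{n=0}^\infty|\varphi(a)|^n\|\psi\varphi^{n+1}\|_{\B}\leq\sup_{n\geq 0}\|\psi\varphi^n\|_{\B}<\infty.$$
Combining, $\sup_{a\in\DD}\alpha(\psi,\varphi,a)<\infty$, and Theorem 2.8 yields that $uC_\varphi$ is bounded on $\B$.

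The main subtlety is verifying that the Taylor-series estimate for $\|uC_\varphi f_a\|_{\B}$ is legitimate in the sufficiency direction. Although this computation lives inside the proof of Lemma 2.11, its only inputs are the series expansion of $\sigma_{\varphi(a)}-\varphi(a)$, the triangle inequality in $\B$, and convergence of $\sum|\varphi(a)|^n$ (automatic since $|\varphi(a)|<1$); no prior boundedness of $uC_\varphi$ is needed, so the argument is self-contained when the hypothesis $\sup_{n\geq 0}\|\psi\varphi^n\|_{\B}<\infty$ is assumed.
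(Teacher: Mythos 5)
Your proof is correct, and its ingredients are exactly those the paper uses: Theorem 2.8, Proposition 2.12(i), and the geometric-series estimate from the proof of Lemma 2.11. The paper states this corollary without proof, as an immediate consequence of Theorem 2.13; the genuinely useful thing you add is the observation that this deduction is not formally airtight, because both Theorem 2.13 and Lemma 2.11 carry the standing hypothesis that $uC_\varphi$ is already bounded on $\B$, so the sufficiency direction of the corollary cannot simply be read off from them. By inlining the estimate
\[
\|uC_\varphi(\sigma_{\varphi(a)}-\varphi(a))\|_{\B}\le(1-|\varphi(a)|^2)\sum_{n=0}^{\infty}|\varphi(a)|^{n}\|\psi\varphi^{n+1}\|_{\B}\le 2\sup_{n\ge 0}\|\psi\varphi^{n}\|_{\B},
\]
whose only input is the hypothesis $\sup_{n\ge 0}\|\psi\varphi^{n}\|_{\B}<\infty$, and then feeding $\sup_{a\in\DD}\alpha(\psi,\varphi,a)<\infty$ into the upper estimate of Theorem 2.8 (which is proved for arbitrary $\psi\in H(\DD)$ and $\varphi\in S(\DD)$), you make the sufficiency direction self-contained; the necessity direction via the test functions $z^{n}$ and the lower estimate of Theorem 2.8 is exactly the paper's argument for the lower bound in Theorem 2.13. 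Two cosmetic points: the geometric sum contributes a factor $1+|\varphi(a)|\le 2$ that you dropped (harmless, since only finiteness matters), and the term-by-term triangle inequality for the infinite series should be justified by noting that under your hypothesis the series converges absolutely in $\B$, hence in particular pointwise to $u\cdot(f_a\circ\varphi)$, so its $\B$-sum is indeed $uC_\varphi f_a$.
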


\section{Essential norm of $uC_\varphi $ on the Bloch space}
In this section we characterize the essential norm of the weighted composition operator $uC_\varphi:\B \rightarrow \B$
  by using various form, especially we will use the Bloch norm of $\psi \varphi^n$. For $t\in(0,1)$, we define
\begr
E(\varphi,a,t)=\{z\in\DD:|(\sigma_{\varphi(a)}\circ\varphi\circ\sigma_a)(z)|>t\} \nonumber.
\endr
Similarly to the proof of Lemma 9 of \cite{ll}, we get the following result. Since the proof is similar, we omit the details.

\begin{lem} {\it
Let $\psi\in H(\DD)$ and $\varphi\in S(\DD)$. Then
\begr
&&\widetilde{\gamma}:=\limsup_{r\rightarrow 1}\limsup_{t\rightarrow 1}\sup_{|\varphi(a)|\leq r}
\Big(\int_{E(\varphi,a,t)}|\psi(\sigma_a(z))|^4dA(z)\Big)^{1/4} \lesssim
\limsup_{n\rightarrow \infty}\|\psi\varphi^n\|_{\B}\nonumber.
\endr}
\end{lem}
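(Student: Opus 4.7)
The plan is to adapt the proof of Lemma 9 of \cite{ll} to the Bergman/Bloch setting, using Lemma 2.2 in place of the $BMOA$--$H^2$ equivalence exploited there. I would carry out the argument in four steps.

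First, the change of variable $w=\sigma_a(z)$ converts
\[
I(a,t):=\int_{E(\varphi,a,t)} |\psi(\sigma_a(z))|^4\,dA(z) = \int_{F(a,t)} |\psi(w)|^4\,|\sigma_a'(w)|^2\,dA(w),
\]
where $F(a,t)=\{w\in\DD:|\sigma_{\varphi(a)}(\varphi(w))|>t\}$. On $F(a,t)$ the inequality $|\sigma_{\varphi(a)}(\varphi(w))|^{4n}>t^{4n}$ holds for every $n\in\NN$, so multiplying the integrand by this factor and extending integration to all of $\DD$ yields
\[
I(a,t) \leq t^{-4n}\,\|g_a\circ\sigma_a\|_{A^4}^4,\qquad g_a:=\psi\cdot[\sigma_{\varphi(a)}\circ\varphi]^n.
\]

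Second, the crucial observation is that $g_a(a)=\psi(a)\,[\sigma_{\varphi(a)}(\varphi(a))]^n=0$. Combining Lemma 2.2 (with $p=4$) and the $A^2$-characterization of the Bloch norm stated in the introduction gives
\[
\|g_a\circ\sigma_a\|_{A^4} = \|g_a\circ\sigma_a - g_a(a)\|_{A^4} \lesssim \|g_a\|_{\B}.
\]
To compare $\|g_a\|_{\B}$ with the quantities $\|\psi\varphi^m\|_{\B}$, I would use the power-series expansion
\[
[\sigma_{\varphi(a)}(w)]^n \;=\; \sum_{m\geq 0} a_m(\varphi(a))\,w^m,
\]
which produces $g_a=\sum_m a_m(\varphi(a))\,\psi\varphi^m$. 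Since $g_a(a)=0$, the pointwise constants cancel in the decomposition $g_a\circ\sigma_a=\sum_m a_m(\varphi(a))\,[(\psi\varphi^m)\circ\sigma_a-(\psi\varphi^m)(a)]$, and the triangle inequality together with Lemma 2.2 applied to each $\psi\varphi^m$ gives
\[
\|g_a\circ\sigma_a\|_{A^4} \;\lesssim\; \sum_{m\geq 0}|a_m(\varphi(a))|\,\|\psi\varphi^m\|_{\B}.
\]

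Finally, taking $\limsup_{t\to 1}$ removes the factor $t^{-4n}$, and then $\sup_{|\varphi(a)|\leq r}$, $\limsup_{r\to 1}$, and eventually $n\to\infty$ are handled by splitting the coefficient sum at a well-chosen cutoff $N=N(n,r)$: the small-$m$ portion is controlled using that each individual $|a_m(b)|$ is small (each term in the Cauchy product for $[\sigma_b(w)]^n$ carries a factor of $b^{n-j}$ or $(1-|b|^2)^j$), while the large-$m$ tail is bounded by $\sup_{m\geq N}\|\psi\varphi^m\|_{\B}$ times the remaining coefficient mass. I expect this last limiting/splitting step to be the main obstacle, since one must carry out the estimates with enough care to avoid the exponential growth in $n$ that the coarse bound $\sum_m|a_m(b)|\leq (1+2|b|)^n$ would suggest; this delicate coefficient analysis is precisely what \cite{ll} carries out in the $BMOA$ setting, which is why the authors here cite that lemma rather than reproducing the details.
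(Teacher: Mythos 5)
Your first two steps are sound and do follow the template of Lemma 9 in \cite{ll}: you insert the factor $|\sigma_{\varphi(a)}\circ\varphi\circ\sigma_a|^{4n}>t^{4n}$ on $E(\varphi,a,t)$, and you correctly observe that the resulting test function vanishes at $a$, so that Lemma 2.2 together with Axler's theorem gives $\|g_a\circ\sigma_a\|_{A^4}=\|g_a\circ\sigma_a-g_a(a)\|_{A^4}\lesssim\|g_a\|_{\B}$. The gap is in the final step, and it is not a technicality you can defer to \cite{ll}: with your choice $g_a=u\cdot[\sigma_{\varphi(a)}\circ\varphi]^n$ the coefficient expansion cannot be made to close. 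Writing $b=\varphi(a)$ and $[\sigma_b(w)]^n=\sum_m a_m(b)w^m$, the constant coefficient is $a_0(b)=b^n$, so the $m=0$ term of your decomposition contributes $|b|^n\|u\circ\sigma_a-u(a)\|_{A^4}\lesssim |b|^n\|u\|_{\B}$; since the supremum is over $|b|\le r$ and $r\to1$ is taken \emph{before} $n\to\infty$, this term survives as $\|u\|_{\B}$, which is not dominated by $\limsup_m\|u\varphi^m\|_{\B}$ (take $\|\varphi\|_\infty<1$). Your claim that each small-$m$ coefficient is individually small already fails at $m=0$. Moreover, the large-$m$ tail forces you to pay the total coefficient mass $\sum_m|a_m(b)|$, which is only bounded by $(1+2|b|)^n$ and explodes when $n\to\infty$ is finally taken; no cutoff $N(n,r)$ repairs both defects simultaneously.

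The route actually taken in \cite{ll} (and intended here) avoids the $n$-th power of $\sigma_{\varphi(a)}$ entirely, and this is exactly where the restriction $|\varphi(a)|\le r$ --- which your argument never uses --- enters. On $E(\varphi,a,t)$ with $|\varphi(a)|\le r$, the pseudo-hyperbolic triangle inequality gives $|\varphi(\sigma_a(z))|\ge(t-r)/(1-tr)=:\rho$, so one may insert the factor $|\varphi\circ\sigma_a|^{4n}\cdot|\sigma_{\varphi(a)}\circ\varphi\circ\sigma_a|^{4}\ge\rho^{4n}t^{4}$ and work with $h_a=u\,\varphi^n\cdot(\sigma_{\varphi(a)}\circ\varphi)$, which still satisfies $h_a(a)=0$. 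Since $w^n\sigma_b(w)=b\,w^n-(1-|b|^2)\sum_{k\ge1}\bar b^{\,k-1}w^{n+k}$, only the powers $\varphi^m$ with $m\ge n$ occur and the coefficients have total mass at most $3$, whence $\|h_a\|_{\B}\le 3\sup_{m\ge n}\|u\varphi^m\|_{\B}$ uniformly in $a$; letting $t\to1$, then $r\to1$, then $n\to\infty$ finishes the proof. You should rewrite your third and fourth steps along these lines.
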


\begin{thm} {\it
Let $\psi\in H(\DD)$ and $\varphi\in S(\DD)$ such that $uC_\varphi:\B \rightarrow \B$ is bounded. Then
\begr
\|uC_\varphi\|_{e,\B\rightarrow \B}&\approx& \limsup_{n\rightarrow\infty}\|\psi\varphi^n\|_{\B}
+\limsup_{|\varphi(a)|\rightarrow 1}\|\uc g_a\|_\B\nonumber\\
&\approx&  \widetilde{\alpha}+   \widetilde{\beta}+ \widetilde{\gamma}\nonumber\\
&\approx&  \widetilde{\alpha}+ \limsup_{|\varphi(a)|\rightarrow 1}\|\uc g_a\|_\B  + \widetilde{\gamma} \nonumber\\
&\approx&  \limsup_{n\rightarrow\infty}\|\psi\varphi^n\|_{\B}  + \widetilde{\beta} \nonumber,
\endr
where  $\widetilde{\alpha }=\limsup_{|\varphi(a)|\rightarrow 1} \alpha(\psi,\varphi,a), ~~~~~ \widetilde{\beta}=\limsup_{|\varphi(a)|\rightarrow 1} \beta(\psi,\varphi,a)$ and
$$ ~~~g_a(z)= \Big(\log\frac{2}{1- \overline{\varphi(a)}z} \Big)^2    \Big(\log\frac{2}{1-|\varphi(a)|^2}\Big)^{-1}.$$}
\end{thm}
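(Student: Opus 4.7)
The plan is to establish the essential-norm estimate by sandwiching $\|uC_\varphi\|_{e,\B\to\B}$ between a single upper bound $\widetilde{\alpha}+\widetilde{\beta}+\widetilde{\gamma}$ and lower bounds by each of the quantities appearing on the right-hand side; Lemma 2.11 and Proposition 2.12 will then reconcile the four listed expressions.

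\textbf{Upper bound.} For $r\in(0,1)$ let $K_rf(z)=f(rz)$. Each $uC_\varphi K_r$ is compact on $\B$, hence
\[
\|uC_\varphi\|_{e,\B\to\B}\leq\liminf_{r\to 1^-}\|uC_\varphi(I-K_r)\|_{\B\to\B}.
\]
Fix $f\in\B$ with $\|f\|_\B\leq 1$ and set $g_r=f-K_rf$; note $g_r(0)=0$, $\|g_r\|_\B\lesssim 1$, and $g_r\to 0$ uniformly on compacta. Apply the three-piece decomposition of $(uC_\varphi g_r)\circ\sigma_a-(uC_\varphi g_r)(a)$ used in the proof of Theorem 2.8. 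The two diagonal pieces are bounded by $\alpha(\psi,\varphi,a)\|g_r\|_\B$ and $\beta(\psi,\varphi,a)\|g_r\|_\B$; splitting the supremum in $a$ at $|\varphi(a)|=\rho$ and letting first $r\to 1$, then $\rho\to 1$, shows their total contribution is at most a constant times $\widetilde{\alpha}+\widetilde{\beta}$ (the inner region $|\varphi(a)|\leq\rho$ tends to zero by uniform convergence of $g_r$). The remaining cross term
\[
\|(\psi\circ\sigma_a-\psi(a))(g_r\circ\varphi\circ\sigma_a-g_r(\varphi(a)))\|_{A^2}
\]
is handled by integrating separately over $E(\varphi,a,t)$ and $\DD\setminus E(\varphi,a,t)$: on the complement the composition remainder is uniformly small via a normal-families argument, while on $E(\varphi,a,t)$ one uses H\"older together with the $A^4$-equivalence of Lemma 2.2 and Lemma 3.1 to pull out the factor $\widetilde{\gamma}$. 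Collecting everything yields $\|uC_\varphi\|_{e,\B\to\B}\lesssim \widetilde{\alpha}+\widetilde{\beta}+\widetilde{\gamma}$.

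\textbf{Lower bound.} For any compact $K:\B\to\B$ and any $\B$-bounded sequence $\{f_n\}$ converging weakly to $0$ we have $\|Kf_n\|_\B\to 0$, hence
\[
\|uC_\varphi-K\|_{\B\to\B}\geq\limsup_{n\to\infty}\|uC_\varphi f_n\|_\B.
\]
Taking $f_n(z)=z^n$ gives the lower bound $\limsup_n\|\psi\varphi^n\|_\B$. Choosing nets $\{a_n\}$ with $|\varphi(a_n)|\to 1$ and applying this inequality successively to the normalized families $f_{a_n}=\sigma_{\varphi(a_n)}-\varphi(a_n)$, $h_{a_n}$, and $g_{a_n}$ introduced in Theorem 2.8 and Proposition 2.12(ii)---each is bounded in $\B$ and weakly null because $\varphi(a_n)$ tends to $\partial\DD$---produces via Proposition 2.12(i)--(ii) the lower bounds $\widetilde{\alpha}$, $\widetilde{\beta}$, and $\limsup_{|\varphi(a)|\to 1}\|uC_\varphi g_a\|_\B$.

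\textbf{Reconciliation and equivalences.} Lemma 2.11 identifies $\limsup_n\|\psi\varphi^n\|_\B$ with $\limsup_{|\varphi(a)|\to 1}\|uC_\varphi(\sigma_{\varphi(a)}-\varphi(a))\|_\B$, and the latter dominates $\widetilde{\alpha}$ modulo a $\widetilde{\beta}$-term by Proposition 2.12(i). Proposition 2.12(ii) gives the symmetric bound
\[
\widetilde{\beta}\lesssim\widetilde{\alpha}+\limsup_{|\varphi(a)|\to 1}\|uC_\varphi g_a\|_\B+\widetilde{\gamma},
\]
and Lemma 3.1 allows $\widetilde{\gamma}$ to be traded for $\limsup_n\|\psi\varphi^n\|_\B$. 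Chaining these comparisons shows that all four right-hand sides in the statement are comparable, finishing the proof.

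\textbf{Main obstacle.} The delicate step is the upper-bound treatment of the cross term carrying both $\psi\circ\sigma_a-\psi(a)$ and the composition remainder $g_r\circ\varphi\circ\sigma_a-g_r(\varphi(a))$: unlike the two diagonal summands it is not directly tamed by Lemma 2.7, and its mass on the set where $\varphi\circ\sigma_a$ is close to the boundary must be controlled through the Nevanlinna-type estimate encoded in Lemma 3.1. Balancing the parameters $r$, $\rho$, and $t$ so that the $E(\varphi,a,t)$ and its complement contributions both collapse to $\widetilde{\gamma}$ (respectively to $0$) is the technical core of the argument.
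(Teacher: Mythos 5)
Your overall architecture is the same as the paper's: Montel-type compact approximants $K_rf(z)=f(rz)$, a split of the supremum at $|\varphi(a)|=\rho$, the set $E(\varphi,a,t)$ together with Lemma 3.1 for the boundary contribution, and Lemma 2.11 with Proposition 2.12 to chain the four equivalent expressions. The lower bound and the reconciliation are essentially correct (one caveat: $h_{a_n}(0)=\log 2$ for every $n$, so $h_{a_n}$ is \emph{not} weakly null and cannot be used as a test family for the essential norm; this is harmless only because $\widetilde{\beta}$ is recovered from Proposition 2.12(ii) and the $g_{a_n}$ anyway, which is what the paper does).

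The genuine gap is in the upper bound, on the inner region $|\varphi(a)|\le\rho$. You keep the three-piece decomposition of Theorem 2.8 there and claim both diagonal pieces vanish as $r\to1$ by uniform convergence of $g_r=(I-K_r)f$ on compacta. For the piece $(\psi\circ\sigma_a-\psi(a))\,g_r(\varphi(a))$ this is fine, since $|\varphi(a)|\le\rho$ pins the argument of $g_r$ to a compact set. But for the piece $\psi(a)\bigl(g_r\circ\varphi\circ\sigma_a-g_r(\varphi(a))\bigr)$ it fails: Lemma 2.7 only yields the bound $\alpha(\psi,\varphi,a)\|g_r\circ\sigma_{\varphi(a)}-g_r(\varphi(a))\|_{A^2}\lesssim\alpha(\psi,\varphi,a)\|g_r\|_{\B}$, and neither factor is small on $\{|\varphi(a)|\le\rho\}$ --- the values $\varphi(\sigma_a(z))$ sweep out all of $\DD$ even when $\varphi(a)$ stays in a compact set, and $\|g_r\|_{\B}$ does not tend to $0$ unless $f$ lies in the little Bloch space. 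So this route only gives $\sup_{a}\alpha(\psi,\varphi,a)$, not a quantity tending to $\widetilde{\alpha}$. Splitting this piece over $E(\varphi,a,t)$ does not rescue it either: after pulling out the scalar $|\psi(a)|$ you are left on the complement with $|\psi(a)|\sup_{|w|\le(t+\rho)/(1+t\rho)}|g_r(w)|$, and $\sup_{|\varphi(a)|\le\rho}|\psi(a)|$ can be infinite for a bounded $uC_\varphi$ (boundedness only controls the product $|\psi(a)|\,\|\sigma_{\varphi(a)}\circ\varphi\circ\sigma_a\|_{A^2}$). The paper avoids this precisely by \emph{not} separating $\psi(a)$ from $\psi\circ\sigma_a$ in the inner region: it estimates $\|\psi\circ\sigma_a\cdot((C_\varphi S_nf)\circ\sigma_a-(C_\varphi S_nf)(a))\|_{A^2}$ as a whole, using the Schwarz-type bound from \cite{Lj2007} together with $\|\psi\circ\sigma_a\cdot\varphi_a\|_{A^2}\lesssim\|uC_\varphi\|_{\B\rightarrow\B}$ (where $\varphi_a=\sigma_{\varphi(a)}\circ\varphi\circ\sigma_a$) on $\DD\setminus E(\varphi,a,t)$, and H\"older with the full weight $\int_{E(\varphi,a,t)}|\psi(\sigma_a(z))|^4dA$ on $E(\varphi,a,t)$ --- which is exactly why $\widetilde{\gamma}$ is defined with $|\psi(\sigma_a(z))|^4$ rather than $|\psi(\sigma_a(z))-\psi(a)|^4$. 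Your inner-region estimate needs to be reorganized along these lines before the upper bound $\widetilde{\alpha}+\widetilde{\beta}+\widetilde{\gamma}$ is actually established.
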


\begin{proof}
 Set $f_n(z)=z^n$. It is well known that $f_n\in\B$  and $f_n\rightarrow 0$ weakly in $\B$ as $n\rightarrow \infty$. Then
\begr
\|uC_\varphi\|_{e,\B\rightarrow \B} \gtrsim \limsup_{n\rightarrow\infty} \|uC_\varphi f_n\|_{\B}=\limsup_{n\rightarrow\infty}\|\psi\varphi^n\|_{\B}.
\endr
Choose $a_n\in\DD$ such that $|\varphi(a_n)|\rightarrow 1$ as $n\rightarrow \infty$. It is easy to check $g_{a_n}$ are uniformly bounded in
$\B$  and converges weakly to zero in $\B$ (see \cite{osz}). By these facts we obtain
\begr
\|uC_\varphi\|_{e,\B\rightarrow \B} &\gtrsim& \limsup_{n\rightarrow \infty} \|uC_\varphi g_{a_n}\|_{\B} = \limsup_{|\varphi(a)|\rightarrow 1} \|uC_\varphi g_a\|_{\B}.
\endr
By (3.1) and (3.2), we obtain
\begr
\|uC_\varphi\|_{e,\B\rightarrow \B}\gtrsim \limsup_{n\rightarrow\infty}\|\psi\varphi^n\|_{\B}
+\limsup_{|\varphi(a)|\rightarrow 1}\|\uc g_a\|_\B.
\endr
From (i) of Proposition 2.12, we see that
\begr
\alpha(\psi,\varphi,a) \lesssim \frac{\beta(\psi,\varphi,a)}{\log\frac{2}{1-|\varphi(a)|^2}}+\|uC_\varphi f_a \|_{\B}, \nonumber
\endr
which together with Lemma 2.11 implies that
 \begr
\widetilde{\alpha }=\limsup_{|\varphi(a)|\rightarrow 1} \alpha(\psi,\varphi,a) \lesssim \limsup_{|\varphi(a)|\rightarrow 1}\|uC_\varphi f_a \|_{\B}  \lesssim \limsup_{n\rightarrow\infty}\|\psi\varphi^n\|_{\B}.
\endr
From (ii) and (iv) of  Proposition 2.12, we see that
 \begr
\beta(\psi,\varphi,a)&\lesssim &\alpha(\psi,\varphi,a)+\|(g_a\circ\varphi\circ\sigma_a-g_a(\varphi(a)))\cdot(\psi\circ\sigma_a-\psi(a))\|_{A^2}\nonumber\\
&&+  \|(uC_\varphi g_a)\circ\sigma_a-(uC_\varphi g_a)(a)\|_{A^2} \nonumber\\
& \lesssim & \alpha(\psi,\varphi,a)+ \|g_a\|_{\B}\frac{\|uC_\varphi\|_{\B\rightarrow \B}}{\sqrt{\log\frac{2}{1-|\varphi(a)|^2} }} + \|uC_\varphi g_a \|_{\B},\nonumber
\endr
 which implies that
  \begr
  \widetilde{\beta}=\limsup_{|\varphi(a)|\rightarrow 1} \beta(\psi,\varphi,a)& \lesssim &  \widetilde{\alpha} + \limsup_{|\varphi(a)|\rightarrow 1} \|uC_\varphi g_a \|_{\B} .
  \endr
By Lemma 3.1, (3.3), (3.4) and (3.5), we have
 \begr
  \|uC_\varphi\|_{e,\B\rightarrow \B}&\gtrsim&   \widetilde{\alpha} +\widetilde{\gamma}+\limsup_{|\varphi(a)|\rightarrow 1}\|\uc g_a\|_\B\nonumber\\
 &\gtrsim&   \widetilde{\alpha} +\widetilde{\gamma}+ \widetilde{\beta}, \nonumber
 \endr
and
 \begr
  \|uC_\varphi\|_{e,\B\rightarrow \B} &\gtrsim &  \limsup_{n\rightarrow\infty}\|\psi\varphi^n\|_{\B} + \widetilde{\alpha} +\limsup_{|\varphi(a)|\rightarrow 1}\|\uc g_a\|_\B     \nonumber\\
  &\gtrsim   &\limsup_{n\rightarrow\infty}\|\psi\varphi^n\|_{\B}+  \widetilde{\beta}  \nonumber  .
 \endr

Next we give the upper estimate for $\|uC_\varphi\|_{e, \B \rightarrow \B}$.
For $n\geq 0$, we define the linear operator on $\B$ by $(K_nf)(z)=f( \frac{n}{n+1}z)$.   It is easy to check that $K_n$ is a compact operator on $\B$. Thus
\[
\|uC_\varphi\|_{e,\B\rightarrow \B}\leq \limsup_{n \rightarrow \infty}\sup_{\|f\|_{\B}\leq 1}\|uC_\varphi(I-K_n)f\|_{\B},
\]
where $I$ is the identity operator. Let $S_n=I-K_n$. Then,
\begr
\|uC_\varphi \|_{e,\B\rightarrow \B}
&\leq& \liminf_{n\rightarrow \infty}\|uC_\varphi S_n\|_{\B}\nonumber\\
&=&\liminf_{n\rightarrow \infty}\sup_{\|f\|_{\B}\leq 1}\big(|\psi(0)(S_nf)(\varphi(0))| + \|(uC_\varphi S_nf \|_{\beta}\big)\\
&=&\liminf_{n\rightarrow \infty}\sup_{\|f\|_{\B}\leq 1} \| uC_\varphi S_nf \|_{\beta}\nonumber.
\endr
 Let $f\in\B$ such that $\|f\|_{\B}\leq 1$. Fix $n\geq 0$, $r\in(0,1)$ and $t\in(\frac{1}{2},1)$. Then
\begr
\| uC_\varphi S_nf \|_{\beta}&\approx&\sup_{a\in \DD}\|(uC_\varphi S_nf)\circ\sigma_a-(uC_\varphi S_nf)(a)\|_{A^2}\nonumber\\
&\leq& \sup_{|\varphi(a)|\leq r}\|(uC_\varphi S_nf)\circ\sigma_a-(uC_\varphi S_nf)(a)\|_{A^2}\nonumber\\
&&+\sup_{|\varphi(a)|> r}\|(uC_\varphi S_nf)\circ\sigma_a-(uC_\varphi S_nf)(a)\|_{A^2}.
\endr
By (iii) and (iv) of Proposition 2.11, we have
\begr
&&\sup_{|\varphi(a)|> r}\|(uC_\varphi S_nf)\circ\sigma_a-(uC_\varphi S_nf)(a)\|_{A^2}\nonumber\\
&\lesssim& \| S_nf \|_{\B}\sup_{|\varphi(a)|> r}\bigg(\alpha(\psi,\varphi,a)+\beta(\psi,\varphi,a)
+\frac{\|uC_\varphi \|_{\B\rightarrow\B}}{ \sqrt{\log\frac{2}{1-|\varphi(a)|^2}}}\bigg) .
\endr
In addition,
\begr
&&\sup_{|\varphi(a)|\leq r}\|(uC_\varphi S_nf)\circ\sigma_a-(uC_\varphi S_nf)(a)\|_{A^2}\nonumber\\
&\leq&\sup_{|\varphi(a)|\leq r}\big(|(S_nf)(\varphi(a))|\|\psi\circ\sigma_a-\psi(a)\|_{A^2} \nonumber\\
 && +\|\psi\circ\sigma_a\cdot (( C_\varphi S_nf)\circ\sigma_a-( C_\varphi S_nf)(a))\|_{A^2}\big)\nonumber\\
&\leq&\|\psi \|_{\B}\max_{|w|\leq r}|(S_nf)(w)|+I_1^{1/2}+I_2^{1/2} ,
\endr
where
\begr
I_1=\sup_{|\varphi(a)|\leq r}\int_{\DD\backslash E(\varphi,a,t)}|(\psi\circ\sigma_a)(z)\cdot((S_nf)\circ\varphi\circ\sigma_a(z)-(S_nf)(\varphi(a)))|^2dA(z), \nonumber
\endr
\begr
I_2=\sup_{|\varphi(a)|\leq r}\int_{ E(\varphi,a,t)}|(\psi\circ\sigma_a)(z)\cdot((S_nf)\circ\varphi\circ\sigma_a(z)
-(S_nf)(\varphi(a)))|^2dA(z).\nonumber
\endr
Let $\varphi_a=\sigma_{\varphi(a)}\circ\varphi\circ \sigma_a$.
Then by (3.19) in \cite[p.37]{Lj2007}, we have
\begr |(S_nf)\circ\sigma_{\varphi(a)}\circ\varphi_a(z)-(S_nf\circ\varphi)(a)|  \lesssim   \sup_{|w|\leq t}|
\big((S_nf)\circ\sigma_{\varphi(a)}\big)(w)-(S_nf)(\varphi(a))|\nonumber
\endr
for $z\in \DD\backslash E(\varphi,a,t)$. Since
 \begr
\|\psi\circ\sigma_a\cdot\varphi_a\|_{A^2}&\leq&\|\psi\circ\sigma_a-\psi(a)\|_{A^2}\|\varphi_a\|_\infty
+|\psi(a)|\|\varphi_a\|_2\nonumber\\
&\lesssim&\sup_{a\in\DD}\|\psi\circ\sigma_a-\psi(a)\|_{A^2}+\alpha(\psi,\varphi,a_n)\nonumber\\
&\lesssim& \|uC_\varphi\|_{\B\rightarrow \B}\nonumber,
\endr
we have
\begr
I_1& \lesssim& \sup_{|\varphi(a)|\leq r}\sup_{|w|\leq t}|\big((S_nf)\circ\sigma_{\varphi(a)}\big)(w)-(S_nf)(\varphi(a))|^2
\|\psi\circ\sigma_a\cdot\varphi_a\|_{A^2}^2\nonumber\\
&\lesssim & \|uC_\varphi\|^2_{\B\rightarrow \B} \sup_{|z|\leq\frac{t+r}{1+tr}}|(S_nf)(z)|^2\nonumber.
\endr
By Lemma 2.2,  we get
\begr
\|(S_nf)\circ\varphi\circ\sigma_a-(S_nf)(\varphi(a))\|_{A^4}^2&\leq&
\sup_{a\in\DD}\|(S_nf)\circ\varphi\circ\sigma_a-(S_nf)(\varphi(a))\|_{A^4}^2\nonumber\\
&\lesssim& \sup_{a\in\DD}\|f\circ\sigma_a-f(a)\|_{A^2}^2\leq 1,\nonumber
\endr
which implies that
\begr
I_2&\leq&\sup_{|\varphi(a)|\leq r}\Big(\int_{E(\varphi,a,t)}|\psi(\sigma_a(z))|^4dA(z)\Big)^{1/2}
 \|(S_nf)\circ\varphi\circ\sigma_a-(S_nf)(\varphi(a))\|_{A^4}^2\nonumber\\
&\leq&\sup_{|\varphi(a)|\leq r}\Big(\int_{E(\varphi,a,t)}|\psi(\sigma_a(z))|^4dA(z)\Big)^{1/2}. \nonumber
\endr
  By combining the above estimates, for $r\in(0,1)$ and $t\in(\frac{1}{2},1)$, we obtain
\begr
&& \| uC_\varphi S_nf \|_{\beta}  \nonumber\\
&\lesssim& \sup_{|\varphi(a)|> r}\Big(\alpha(\psi,\varphi,a)+\beta(\psi,\varphi,a)
+\frac{\|uC_\varphi \|_{\B\rightarrow\B}}{\sqrt{ \log\frac{2}{1-|\varphi(a)|^2} }}\Big)\nonumber\\
&&+\sup_{|\varphi(a)|\leq r}\Big(\int_{E(\varphi,a,t)}|\psi(\sigma_a(z))|^4dA(z)\Big)^{1/4}+\sup_{|z|\leq\frac{t+r}{1+tr}}|(S_nf)(z)| \|\uc\|_{\B\rightarrow\B}\nonumber.
\endr
Taking the supremum over $\|f\|_\B\leq 1$ and letting $n\rightarrow \infty$, we obtain
\begr
\|uC_\varphi\|_{e,\B\rightarrow \B} &\lesssim& \sup_{|\varphi(a)|> r}\Big(\alpha(\psi,\varphi,a)+\beta(\psi,\varphi,a)
+\frac{\|uC_\varphi \|_{\B\rightarrow\B}}{\sqrt{ \log\frac{2}{1-|\varphi(a)|^2} }}\Big)\nonumber\\
&&+\sup_{|\varphi(a)|\leq r}\Big(\int_{E(\varphi,a,t)}|\psi(\sigma_a(z))|^4dA(z)\Big)^{1/4},  \nonumber
\endr
which implies that
 \begr
\|uC_\varphi\|_{e,\B\rightarrow \B} \lesssim \widetilde{\alpha}+   \widetilde{\beta}+ \widetilde{\gamma}, \nonumber
\endr
 By (3.4) (3.5) and Lemma 3.1, we get
 \begr
\|uC_\varphi\|_{e,\B\rightarrow \B} &\lesssim & \widetilde{\beta}+  \limsup_{n\rightarrow \infty}\| \psi\varphi^n \|_{\B}\nonumber\\
 &\lesssim & \widetilde{\alpha}+   \limsup_{|\varphi(a)|\rightarrow 1}\|\uc g_a\|_\B+ \limsup_{n\rightarrow \infty}\| \psi\varphi^n \|_{\B}\nonumber\\
 &\lesssim &    \limsup_{|\varphi(a)|\rightarrow 1}\|\uc g_a\|_\B+ \limsup_{n\rightarrow \infty}\| \psi\varphi^n \|_{\B}. \nonumber
\endr
 By (ii), (iv) of Proposition 2.12, we have
\begr
&&\|uC_\varphi\|_{e,\B\rightarrow \B}\nonumber\\
 &\lesssim& \sup_{|\varphi(a)|> r}\Big(\alpha(\psi,\varphi,a)+\beta(\psi,\varphi,a)
+\frac{\|uC_\varphi \|_{\B\rightarrow\B}}{\sqrt{ \log\frac{2}{1-|\varphi(a)|^2} }}\Big)\nonumber\\
&&+\sup_{|\varphi(a)|\leq r}\Big(\int_{E(\varphi,a,t)}|\psi(\sigma_a(z))|^4dA(z)\Big)^{1/4} \nonumber\\
&\lesssim& \sup_{|\varphi(a)|> r}\Big(\alpha(\psi,\varphi,a)
+\|(uC_\varphi g_a)\circ\sigma_a-(uC_\varphi g_a)(a)\|_{A^2}\nonumber \\
&&+ \|(\psi\circ\sigma_a-\psi(a) ) \cdot  (g_a\circ\varphi\circ\sigma_a-g_a(\varphi(a)))\|_{A^2} + \frac{\|uC_\varphi \|_{\B\rightarrow\B}}{\sqrt{\log\frac{2}{1-|\varphi(a)|^2}}}\Big) \nonumber\\
&&+\Big(\int_{E(\varphi,a,t)}|\psi(\sigma_a(z))|^4dA(z)\Big)^{1/4} \nonumber\\
&\lesssim& \sup_{|\varphi(a)|> r}\Big(\alpha(\psi,\varphi,a)
+\| uC_\varphi g_a \|_{\B} + \|g_a\|_{\B} \frac{\|uC_\varphi \|_{\B\rightarrow\B}}{\sqrt{\log\frac{2}{1-|\varphi(a)|^2}}}  + \frac{\|uC_\varphi \|_{\B\rightarrow\B}}{\sqrt{\log\frac{2}{1-|\varphi(a)|^2}}}\Big) \nonumber\\
&&+\Big(\int_{E(\varphi,a,t)}|\psi(\sigma_a(z))|^4dA(z)\Big)^{1/4} , \nonumber
\endr
which implies that
 \begr
\|uC_\varphi\|_{e,\B\rightarrow \B} &\lesssim & \widetilde{\alpha}+   \limsup_{|\varphi(a)|\rightarrow 1}\|\uc g_a\|_\B+  \widetilde{\gamma}. \nonumber
\endr
We complete the proof of the theorem.
\end{proof}

From Theorem 3.2,  we immediately get the following characterizations for the compactness of the operator $uC_\varphi:\B \rightarrow \B$.
\begin{cor}  {\it
Let $\psi\in H(\DD)$ and $\varphi\in S(\DD)$ such that  $\uc$ is bounded on $\B$. Then the following statements are equivalent.
\begin{itemize}
\item[(i)] The operator $\uc:\B \rightarrow \B$ is compact.
\item[(ii)]$$\limsup_{n\rightarrow \infty}\|u\varphi^n\|_\B=0~~~~\mbox{and}~~~~  \limsup_{|\varphi(a)|\rightarrow 1}\|\uc g_a\|_\B=0.$$
\item[(iii)]$$\limsup_{n\rightarrow \infty}\|u\varphi^n\|_\B=0~~~~\mbox{and}~~~~  \limsup_{|\varphi(a)|\rightarrow 1}\beta(\psi,\varphi,a)=0.$$
\item[(iv)]\begr
 \limsup_{|\varphi(a)|\rightarrow 1}\alpha(\psi,\varphi,a)=0, ~~~~~ \limsup_{|\varphi(a)|\rightarrow 1}\beta(\psi,\varphi,a)=0,\nonumber
\endr
and
\begr
\limsup_{r\rightarrow 1}\limsup_{t\rightarrow 1}\sup_{|\varphi(a)|\leq r}
\Big(\int_{E(\varphi,a,t)}|\psi(\sigma_a(z))|^4dA(z)\Big)^{1/4} =0.\nonumber
\endr
\item[(v)]\begr
 \limsup_{|\varphi(a)|\rightarrow 1}\alpha(\psi,\varphi,a)=0, ~~~~~ \limsup_{|\varphi(a)|\rightarrow 1}\|\uc g_a\|_\B =0,\nonumber
\endr
and
\begr
\limsup_{r\rightarrow 1}\limsup_{t\rightarrow 1}\sup_{|\varphi(a)|\leq r}
\Big(\int_{E(\varphi,a,t)}|\psi(\sigma_a(z))|^4dA(z)\Big)^{1/4} =0.\nonumber
\endr
\end{itemize}}
\end{cor}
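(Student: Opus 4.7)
The plan is to derive Corollary 3.3 as an immediate consequence of Theorem 3.2, together with the standard fact that a bounded linear operator on a Banach space is compact if and only if its essential norm vanishes. Since each of the four equivalent expressions produced by Theorem 3.2 for $\|uC_\varphi\|_{e,\B\to\B}$ is a finite sum of non-negative quantities and $\approx$ preserves vanishing, such a sum equals zero precisely when every one of its summands equals zero.

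Concretely, I would run through the four equivalences in parallel. Assuming $uC_\varphi$ is bounded on $\B$, Theorem 3.2 gives
\begin{align*}
\|uC_\varphi\|_{e,\B\to\B}
&\approx \limsup_{n\to\infty}\|\psi\varphi^n\|_{\B} + \limsup_{|\varphi(a)|\to 1}\|\uc g_a\|_\B \\
&\approx \widetilde{\alpha} + \widetilde{\beta} + \widetilde{\gamma} \\
&\approx \widetilde{\alpha} + \limsup_{|\varphi(a)|\to 1}\|\uc g_a\|_\B + \widetilde{\gamma} \\
&\approx \limsup_{n\to\infty}\|\psi\varphi^n\|_{\B} + \widetilde{\beta}.
\end{align*}
Setting each right-hand side to zero and splitting into non-negative summands produces exactly the lists in (ii), (iv), (v), and (iii) respectively. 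To line things up: the vanishing of the first expression is the pair of conditions in (ii); the second expression gives (iv), after unfolding the definition of $\widetilde{\gamma}$ so that $\widetilde{\gamma}=0$ is the displayed integral condition; the third expression gives (v) in the same way; and the fourth expression gives (iii). In each case I would also remark that, with the paper's convention $\psi = u$, the quantity $\|\psi\varphi^n\|_\B$ appearing in Theorem 3.2 matches $\|u\varphi^n\|_\B$ used in the statement.

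No genuine analytic obstacle arises here; all of the work has already been spent in Theorem 3.2 and in the supporting Proposition 2.12, Lemma 2.11, and Lemma 3.1. The corollary is purely bookkeeping: matching each displayed condition with the correct one of the four equivalent formulas for the essential norm and invoking the compact $\Longleftrightarrow$ essential norm zero principle.
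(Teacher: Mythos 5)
Your proposal is correct and matches the paper exactly: the paper derives this corollary "immediately" from Theorem 3.2 via the principle that a bounded operator is compact iff its essential norm vanishes, with each of the four equivalent formulas (being sums of non-negative terms) vanishing precisely when all of its summands do, yielding items (ii), (iv), (v), (iii) respectively. Nothing further is needed.
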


\end{document}